\documentclass[12pt,reqno]{article}
\usepackage{amsgen, amsmath, amsfonts, amsthm, amssymb, enumerate,amscd,graphics,dsfont,comment,tikz-cd}
\usetikzlibrary{intersections}

\newtheorem{defn}{Definition}[section]
\newtheorem{prop}[defn]{Proposition}
\newtheorem{lem}[defn]{Lemma}
\newtheorem{thm}[defn]{Theorem}
\newtheorem{cor}[defn]{Corollary}
\newtheorem{rem}[defn]{Remark}

\newcommand {\ZZ}{{\mathds Z}}

\newcommand {\C}{{\mathds C}}

\newcommand {\Q}{{\mathds Q}}

\newcommand {\OO}{{\mathcal O}}

\newcommand {\M}{{\mathcal M}}

\newcommand {\CP}{{\mathds P}}

\newcommand {\D}{{\mathcal D}}

\newcommand {\AB}{{\mathds A}}

\newcommand{\hG}{\hat{G}}
\newcommand {\MMR}{{\mathcal MR}}

\def\Ind{\operatorname{Ind}}

\def\Ker{\operatorname{Ker}}

\def\div{\operatorname{div}}

\def\mod{\operatorname{mod}}
\def\dim{\operatorname{dim}}

\def\reg{\operatorname{reg}}
\def\Hom{\operatorname{Hom}}
\def\Ext{\operatorname{Ext}}
\title{Torsion cycles on Fermat varieties}
\author{Ramesh Sreekantan\\Indian Statistical Institute Bangalore}
\begin{document}
	\baselineskip=17pt
	\maketitle
	
	\begin{abstract}
		
		A theorem of Manin and Drinfeld \cite{madr},\cite{drin} states that any divisor of degree $0$ on the cusps of a modular curve is torsion in the Jacobian. An elegant proof of this result was provided by Elkik \cite{elki} using mixed Hodge theory. 
		
		Rohrlich \cite{rohr} proved a generalization of this to Fermat curves. In this note we reprove his results along the lines of the work of Elkik. We then use the same methods to generalize it to higher codimensional null homologous cycles as well as higher Chow cycles on Fermat varieties. 
		
		\vspace{\baselineskip}
		{\bf Mathematical Sciences Classification (2020):} 14C25, 19E15, 14C30.

	\end{abstract}

%	\tableofcontents
	
	\section{Introduction.}
	
	A well known theorem of Manin and Drinfeld  \cite{madr,drin} states that any divisor of degree $0$ supported on the cusps of a modular curve is torsion in the Jacobian. This was generalized to Fermat curves by   Rohrlich \cite{rohr}. He proved the following theorem 
	
	\begin{thm}[Rohrlich] Let $F_d=F_d^1$  be the Fermat curve of degree $d$. 
		$$	X_0^d+X_1^d+X_2^d=0.$$
		Let $S_d$ be the set of $3d$ `Fermat points' - which are points determined by the hyperplanes $X_i=0$. These are the `trivial' solutions to Fermats Last Theorem. Then any divisor of degree $0$ supported on $S_d$ is torsion is the Jacobian. That is, if $D=\sum_{P \in S_d} a_P P$ such that $\sum_{P \in S_d} a_P=0$ then there exists a function $f \in K(F_d)$ such that for some $k \in {\mathds N}$,
		$$\div(f)=kD$$
	\end{thm}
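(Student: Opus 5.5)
The plan follows Elkik's mixed Hodge theoretic approach rather than Rohrlich's explicit construction of functions. Let $U = F_d \setminus S_d$. A degree-zero divisor $D$ supported on $S_d$ determines, by restricting the localization sequence for $H^1(F_d)\to H^1(U)\to \ZZ^{S_d}(-1)$, an extension of mixed Hodge structures
$$0\to H^1(F_d,\ZZ(1))\to E_D\to \ZZ(0)\to 0 .$$
Its class in $\Ext^1_{\mathrm{MHS}}(\ZZ(0),H^1(F_d,\ZZ(1)))\cong J(F_d)$ is, up to sign, the Abel--Jacobi image of $D$. It therefore suffices to show that this extension class is torsion. Equivalently, after tensoring with $\Q$, the sequence $0\to H^1(F_d,\Q)\to H^1(U,\Q)\to \Q(-1)^{S_d,0}\to 0$ must split as mixed Hodge structures.

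The key input is the symmetry group $G=\mu_d^3/\mu_d\cong \mu_d^2$, acting by $X_i\mapsto \zeta_i X_i$. This action preserves $S_d$, and hence preserves $U$ and the whole sequence, which is compatible with the action. Decompose everything over $\Q(\mu_d)$, or work over $\Q$ with the idempotents of $\Q[G]$, into isotypic pieces for characters $\chi=(a,b,c)$ with $a+b+c\equiv 0 \pmod d$.

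On the pure part we use the standard fact that $H^1(F_d)$ decomposes as $\bigoplus \chi$ over characters with $a,b,c$ all nonzero mod $d$, each appearing with multiplicity one. This can be computed from the quotient $F_d\to F_d/H$ by subgroups, or via Weil's description of the eigenspaces, and I will assume it as classical.

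On the boundary side, $G$ permutes the $3d$ points of $S_d$. The $d$ points on $X_0=0$, namely $(0:1:-\zeta)$ with $\zeta^d=-1$, form a single orbit whose stabilizer is the subgroup acting only on $X_0$. The resulting permutation module is $\Ind$ from that stabilizer, so the only characters appearing in $\Q^{S_d}$ are those with some coordinate $a,b,c$ equal to $0$. Consequently no character occurring in $\Q(-1)^{S_d,0}$ occurs in $H^1(F_d)$.

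Since the extension is $G$-equivariant and the category of $\Q$-mixed Hodge structures with $G$-action splits into isotypic components, which are defined over $\Q$ after grouping Galois-conjugate characters, each component of the extension lies in some $\Ext^1(\chi\text{-part of }\Q(-1)^{S_d,0},\ \chi\text{-part of }H^1(F_d))$. One of these two parts is always zero, so $H^1(U,\Q)$ splits. Hence the integral extension class is torsion, so the Abel--Jacobi image of $D$ in $J(F_d)$ is torsion. By Abel's theorem, $kD=\div(f)$ for some $k$.

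The main obstacle is the character bookkeeping on $H^1(F_d)$: one must verify that $H^1$ contains no characters with a zero coordinate. The cleanest route is to note that the invariants of the subgroup $\mu_d$ acting on one coordinate give the quotient $F_d/\mu_d\cong\CP^1$, whose $H^1$ vanishes, so $H^1(F_d)^{\mu_d^{(i)}}=0$ for each $i$. This observation suffices, and full knowledge of the decomposition is unnecessary. The second point needing care is that projection to an isotypic component is exact on mixed Hodge structures, since the $G$-action is by morphisms of MHS and the idempotents lie in $\Q[G]$. Once $G$ is known to act through morphisms of MHS, this is immediate.
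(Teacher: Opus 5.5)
Your proposal is correct and follows essentially the paper's route. The paper likewise splits $0\to H^1(F_d,\Q)\to H^1(F_d-S_d,\Q)\to H^2_{S_d}(F_d,\Q)^0\to 0$ by projecting onto the isotypic components for characters with some $a_i=0$, which are the only characters occurring in the permutation module on $S_d$ and, by Shioda's theorem, are absent from $H^1(F_d)$; it then uses the Abel--Jacobi interpretation of the extension class. Your shortcut $H^1(F_d,\Q)^{\mu_d^{(i)}}\cong H^1(F_d/\mu_d^{(i)},\Q)=H^1(\CP^1,\Q)=0$ neatly replaces the appeal to Shioda, and grouping Galois-conjugate characters makes explicit the rationality of the projector, which the paper leaves implicit (minor slip: the points on $X_0=0$ are $(0:1:\eta)$ with $\eta^d=-1$, and your $(0:1:-\zeta)$ fails for odd $d$).
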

	
	There is an elegant `pure thought' proof of the original Manin-Drinfeld theorem due to Deligne and Elkik \cite{elki}. The idea is that the Abel-Jacobi map can be described in terms of extensions of mixed Hodge structures. Further, if $X$ is a modular curve, $S$ the set of cusps and $Y=X- S$ the open curve, then the rational Hodge structure on $H^1(Y)$ is a direct sum of the Hodge structure of pure weight $1$ on $H^1(X)$ and a Hodge structure of pure weight $2$. Once one knows this a short argument (Proposition \ref{splitting}) shows that any divisor of degree $0$ is torsion. 
	
	In this short paper  we show how the same idea can be applied to get a new proof of Rohrlich's theorem - though not quite as explicit as his is. However, one advantage is that this can be generalized to higher co-dimensions with only a little more work, though we get a weaker statement - only torsion in the intermediate Jacobian. 
	
	A linear subvariety of a Fermat hypersurface is a subvariety obtained by intersection with hyperplane sections. We have the following theorem (Theorem \ref{nullhomologoustheorem}, Corollary \ref{nullhomologouscorollary} ).

	  \begin{thm} Let $F^n_d$ be the Fermat hypersurface of degree $d$ and dimension $n$. Let $S_d^p=\cup_{|I|=p} C_I$ be the union of all the linear subvarieties of codimension $p$.  Then the exact sequence 
		$$0 \rightarrow H^{2p-1}(F^n_d,\Q) \rightarrow H^{2p-1}(F^n_d-S_d^p,\Q) \rightarrow H^{2p}_{S_d^p}(F_d^n,\Q)^0 \rightarrow 0$$
		is a split exact sequence of Hodge structures. Here 
		$$H^{2p}_{S_d^p}(F_d^n,\Q)^0=\Ker\{H^{2p}_{S_d^p}(F^n_d,\Q) \rightarrow H^{2p}(F^n_d,\Q)\}$$
		is the kernel of the cycle class map and is generated by linear combinations of the linear subvarieties  which are null-homologous. 
		
		As a result, any null-homologous cycle supported on $S_d^p$ is torsion in the $p^{th}$-intermediate Jacobian. 
	
	\end{thm}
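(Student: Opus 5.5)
We will prove the splitting by a weight and character argument on the group $G=\mu_d^{n+2}/\Delta$, which acts on $F^n_d$ by scaling coordinates and preserves every $C_I$. All maps in the localization sequence
$$H^{2p-1}_{S}(F)\to H^{2p-1}(F)\to H^{2p-1}(F-S)\to H^{2p}_{S}(F)\to H^{2p}(F),\qquad S=S^p_d,$$
are $G$-equivariant morphisms of mixed Hodge structures, with coefficients extended to $\Q(\mu_d)$ where needed.

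First we establish exactness on the left. The group $H^{2p-1}_S(F)$ is computed by purity and Mayer--Vietoris from the cohomology of the intersections $C_I\cap C_J$. These are lower-dimensional Fermat varieties (linear subvarieties of higher codimension), and their cohomology in the relevant degrees sits in low degree relative to the codimension, so $H^{2p-1}_S(F)=0$. Hence the sequence in the statement is exact. Its right term is spanned by the fundamental classes of the $C_I$ of codimension $p$, so it is a sum of copies of $\Q(-p)$, of pure type $(p,p)$.

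The core of the proof is to show that the extension is split. Decompose everything under the characters $\chi$ of $G$. By the classical computation (Weil, Katz, Shioda), the primitive cohomology $H^{n}(F)_{\rm prim}$ decomposes into one-dimensional eigenspaces $V_a$ indexed by $a=(a_0,\dots,a_{n+1})$ with all $a_i\neq 0$ and $\sum a_i\equiv 0 \pmod d$. Every other cohomology group of $F$ is spanned by powers of the hyperplane class, which is $G$-invariant. In the only interesting case $2p-1=n$, the left term $H^{n}(F)$ therefore involves only characters with all $a_i\ne0$. The right term, however, is spanned by the classes $[C_I]$. The stabiliser structure of these classes implies that they only involve characters with some $a_i=0$: each component of $C_I$ is cut out by equations $x_i=\zeta x_j$, and is therefore fixed by the subgroup scaling the remaining coordinates together. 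Thus the two outer terms have disjoint sets of $G$-characters, and the $G$-isotypic decomposition of $H^{2p-1}(F-S)$ gives a canonical splitting. This splitting respects the mixed Hodge structure, since $G$ acts by morphisms, and it descends to $\Q$ by Galois invariance of the character sets.

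Finally, the torsion statement follows as in Proposition \ref{splitting}. The Abel--Jacobi image of a null-homologous cycle $Z=\sum a_I C_I$ is the class of the pulled-back extension
$$0\to H^{2p-1}(F)\to E_Z\to \Q(-p)\to 0$$
in $\Ext^1_{MHS}(\Q(-p),H^{2p-1}(F))\cong J^p(F)\otimes\Q$. A rational splitting kills this class rationally, so $Z$ is torsion in $J^p(F)$.

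I expect the main obstacle to be the character computation for the classes of the linear subvarieties: one must check carefully that every $[C_I]$, as a local cohomology class, lies in the eigenspaces with some $a_i=0$, including for the components defined by $x_i=\zeta x_j$ rather than $x_i=0$. The vanishing of $H^{2p-1}_S$ in the required degrees also needs care.
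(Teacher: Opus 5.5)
Your proposal is correct and follows essentially the paper's route: decompose under $G_d^n$, observe that $H^{2p-1}(F^n_d)$ only carries characters in $A_d^n$ while the classes of the linear subvarieties only carry characters with some $a_i=0$, and split by the corresponding projector. The paper uses the averaging operator for $n>1$ and the projectors onto the $B_i$ for $n=1$; your single Galois-stable projector handles both cases at once and also supplies the descent to $\Q$ and the left-exactness that the paper leaves implicit.

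One small correction: for $n>1$ the $C_I$ are irreducible, $G_d^n$-stable and cut out by $x_i=0$ alone, not by equations $x_i=\zeta x_j$. Their classes therefore lie in the trivial character, which still has some $a_i=0$. The $x_i=\zeta x_j$ description only applies to the Fermat points when $n=1$.
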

	A conjecture of Beilinson and Bloch asserts that the Abel-Jacobi map is injective for varieties defined over number fields, hence this would imply that the cycle is actually torsion in the Chow group itself. That is the case when $n=1$, which is Rohrlich's theorem. 
	
	 We then show  that the same methods can be used to obtain a statement about certain higher Chow cycles having torsion regulator (Theorem \ref{higherfermatsplitting}, Corollary \ref{higherchowtorsion})
	 
	 \begin{thm}
	 	Let $y$ be a higher Chow cycle  in the group $CH^p(F_d^n,q)_{\Q}=H^{2p-q}(F_d^n,\Q(p))$,
	 	$$y=\sum (Z_I,f_{I_i},\dots, f_{I_q})$$
	 	where $Z_I$ are linear subvarieties of codimension $p-q$ and $f_{I_j}$ are functions on $Z_I$ with  divisorial support on linear subvarieties of $Z_I$. Then the regulator $\reg(y)$ is $0$ in the Deligne cohomology group $H^{2p-q}_{\D}(F_d^n,\Q(p))$. 
	 
	 \end{thm}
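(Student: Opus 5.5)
The plan is to mimic the Elkik argument for null-homologous cycles, now for higher Chow cycles. Write $U=F_d^n-S$, where $S$ is the union of all linear subvarieties of the relevant codimensions. The first step is to realise the regulator of a higher Chow cycle of the stated shape as the class of an extension of mixed Hodge structures. The cycle $y=\sum (Z_I,f_{I_1},\dots,f_{I_q})$ is supported on $Z=\cup Z_I$, and each $f_{I_j}$ is a unit on the complement in $Z_I$ of a union of linear subvarieties. The tuple $(f_{I_1},\dots,f_{I_q})$ then defines a class in $H^q$ of that open part of $Z_I$, with weight $2q$ and type $(q,q)$: it is the cup product of the classes $d\log f_{I_j}$. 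Pushing forward via the Gysin map for $Z_I\subset F_d^n$ (codimension $p-q$), we get a class of type $(p,p)$ in a relative or local cohomology group of the form $H^{2p-q}_{Z'}(F_d^n-W,\Q(p))$. Here $W$ is the union of the divisorial supports, which are linear subvarieties of codimension $p-q+1$ in $F_d^n$. The localisation sequence then gives, by pullback along this $\Q(0)\to$ (Hodge class), an extension
$$0\to H^{2p-q-1}(F_d^n,\Q(p))\to E_y\to \Q(0)\to 0,$$
whose class in $\Ext^1_{MHS}(\Q(0),H^{2p-q-1}(F_d^n,\Q(p)))=H^{2p-q}_{\D}(F_d^n,\Q(p))$ is $\reg(y)$. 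I would take this as the standard description: the Beilinson/Jannsen description of the regulator via extensions, extending the Carlson description used in Proposition \ref{splitting}.

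The key step is to show that the relevant extension of Hodge structures splits over $\Q$. The extension is built from the cohomology of $F_d^n$ and of complements of unions of linear subvarieties $C_I$; here $C_I$ is $F_d^n$ intersected with the coordinate hyperplanes $X_i=0$, $i\in I$. Each $C_I$ is again a Fermat variety, of lower dimension. As in the proof of Theorem \ref{nullhomologoustheorem}, I will use the action of $\mu_d^{n+2}/\mu_d$ on $F_d^n$, which preserves every $C_I$ and hence every stratum. Decompose all cohomology groups into character eigenspaces. The classical computation (Weil, Shioda–Katsura) says the primitive cohomology of $F_d^n$ lives in the characters $a=(a_0,\dots,a_{n+1})$ with all $a_i\neq 0$. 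The cohomology of the strata $C_I$ and their complements, beyond the classes of linear subvarieties, lives only in characters with $a_i=0$ for some $i\in I$, since the factor $\mu_d$ in coordinate $i$ acts trivially on $C_I$. More precisely, the group $H^{2p-q}$ of the boundary and the classes coming from tuples of $d\log$ of ratios of coordinate functions lie in characters with some zero coordinate. The image of $\Q(0)$ spanned by $y$ is a fixed vector in such an isotypic component. The sub $H^{2p-q-1}(F_d^n)$, meanwhile, lies in odd degree $2p-q-1$. Its nonzero part is primitive, hence in characters with all $a_i\neq0$, when $2p-q-1\neq n$ it vanishes except for algebraic classes in even degree. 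The group-equivariant projection onto isotypic components, defined over $\Q$ after grouping Galois-conjugate characters, gives a morphism of MHS $E_y\to\Q(0)$-part that splits the sequence. Hence the extension class is zero.

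The last step is bookkeeping: since $y$ is a $\Q$-combination of such elementary cycles and the regulator is additive, $\reg(y)=0$ in $H^{2p-q}_{\D}(F_d^n,\Q(p))$. One can also phrase this as: the whole sequence for $F_d^n-S$ is split (the analogue of Theorem \ref{nullhomologoustheorem}), so every extension pulled back from it is split.

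The main obstacle is the first step. We must verify carefully that $\reg(y)$ really equals the class of the pulled-back extension obtained from the localisation sequence for these higher cycles. This requires a compatibility between the Gysin/residue maps and Deligne cohomology. We must also identify the character decomposition of the cohomology of the open strata with $d\log$ classes, so that $\Q(0)$ sits in characters disjoint from those of $H^{2p-q-1}(F_d^n)$. To handle this, I would try an induction on the stratification by the $C_I$, using the Mayer–Vietoris and Gysin sequences and the fact that each $C_I$ is a Fermat variety with a trivially acting factor.
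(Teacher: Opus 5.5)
Your strategy matches the paper's. You realise $\reg(y)$ as an extension pulled back from a $G_d^n$-equivariant localisation sequence and split it with an equivariant projector; the paper's $T=\frac{1}{|G_d^n|}\sum_g g$ is the projector onto the trivial isotypic component.

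The gap is your claim that the subobject $H^{2p-q-1}(F_d^n)$ lives in odd degree, and hence only in characters with all $a_i\neq 0$. That holds only for even $q$. For odd $q$, in particular the main case $q=1$, the degree $2p-q-1$ is even. Whenever $2p-q-1\le 2n$, the group $H^{2p-q-1}(F_d^n,\Q(p))$ then contains the line spanned by $h^{(2p-q-1)/2}$, where $h$ is the hyperplane class, and $G_d^n$ acts trivially on it. That is exactly the isotypic component in which you place the class of $y$ (``a fixed vector''). So the projector does not separate $\Q(0)$ from the subobject. It only shows that $\reg(y)$ lies in the summand $\Ext^1_{MHS}(\Q(0),\Q(\tfrac{q+1}{2}))\cdot h^{(2p-q-1)/2}\cong\C/(2\pi i)^{(q+1)/2}\Q$, which is not zero.

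For $q=1$ this cannot be repaired, because the statement itself fails on that summand. Take $y=(C_I,c)$ with $|I|=p-1$ and $c\in\C^*$ not a root of unity. It satisfies the hypotheses, since its divisor is empty. Its regulator $h^{p-1}\otimes\log c$ is nonzero in $H^{2p-1}_{\D}(F_d^n,\Q(p))$, because $\Q\cdot h^{p-1}\otimes\Q(1)\cong\Q(1)$ is a direct summand of $H^{2p-2}(F_d^n,\Q(p))$ as a Hodge structure. Replacing $X_b/X_c$ by $c\,X_b/X_c$ in the paper's cycle $(C_a,X_b/X_c)+(C_b,X_c/X_a)+(C_c,X_a/X_b)$ changes the regulator by exactly this amount. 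So merely excluding decomposable cycles does not rescue the statement either.

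The paper gets around this by passing to primitive cohomology (Theorem~\ref{higherfermatsplitting}). What its argument actually proves is that the component of $\reg(y)$ in $\Ext^1_{MHS}(\Q(0),H^{2p-q-1}_{prim}(F_d^n,\Q(p)))$ vanishes, i.e.\ vanishing modulo decomposable classes. There Shioda's theorem rules out the trivial character, so the projector does split. You need to restrict both the statement and the proof in the same way.

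This is also all your first step can deliver. The localisation sequence for $|Z|=\bigcup Z_I$ gives an extension of $\Q(0)$ not by $H^{2p-q-1}(F_d^n)$ but by its quotient by the image of $H^{2p-q-1}_{|Z|}(F_d^n)$. For odd $q$ that image contains a nonzero multiple of $h^{(2p-q-1)/2}$, namely the Gysin image of $h^{(q-1)/2}$ from any $Z_I$. So the missing component is invisible to your construction anyway.

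Finally, you can drop the assertion that the class of $y$ is $G_d^n$-fixed and keep only that it lies in characters with some $a_i=0$. That suffices for the primitive statement. It also remains true when the $Z_I$ are curves and the $f_I$ have divisors on Fermat points (e.g.\ $CH^2(F_d^2,1)$). In that case $g^*f$ is no longer a constant multiple of $f$, so the invariance argument used in the paper does not apply, while your character argument still does.
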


	{\bf Acknowledgments:} I would like to thank the Korea Advanced Institute of Science and Technology (KAIST, Daejeon) for their hospitality when this work was started. I would like to acknowledge the use of AI - specifically Gemini 3 - for some of the arguments, literature and typographical and stylistic comments. I would like to thank Kannappan Sampath and Jinhyun Park for their comments. 
		
		\section{Hodge structures.}
		
		\subsection{Mixed Hodge structures.}
		
		According to Deligne \cite{deli}, a mixed Hodge structure (MHS) is an Abelian group $H_{\ZZ}$ with two filtrations. 
		
		\begin{itemize}
			\item  An increasing  {\em weight filtration} $W_{\bullet}$ on $H_{\Q}=H_{\ZZ} \otimes \Q$. 
			
			\item A decreasing {\em Hodge filtration} $F^{\bullet}$ on $H_{\C}=H_{\ZZ}\otimes \C$.
		\end{itemize}
	such that the graded pieces $Gr^{W_{\bullet}}_l = W_l/W_{l-1}$ 
    have a pure Hodge structure of weight $l$.

	\subsection{Extensions of Mixed Hodge structures.}
	
	An extension of $B$ by $A$, where $A$ and $B$ are MHSs, is a short exact sequence of mixed Hodge structures  
	$$e(B,A):0 \rightarrow A \rightarrow E \rightarrow B \rightarrow 0$$
	$e(B,A)$ determines a class in the set $\Ext_{MHS}(B,A)=\Ext^1_{MHS}(B,A)$. The set $\Ext_{MHS}^1(B,A)$ is a group under the B\"ar sum. A class is trivial if and only if the exact sequence splits.

	An extension is called a {\em separated} extension of MHS if the  highest weight of $A$ is less than the lowest weight of $B$. Carlson \cite{carl} showed that in this case that group can be identified with a generalized torus 
	
	\begin{thm}[Carlson] If $A$ and $B$ are  mixed Hodge structures such that the highest weight of $A$ is less than the lowest weight of $B$ - so any $e(B,A)$ in $\Ext(B,A)$ is separated. Then 
		$$\Ext_{MHS}(B,A)=\Hom_{\C}(B,A)/(F^0 \Hom_{\C}(B,A)+\Hom_{\ZZ}(B,A))$$
		
		\end{thm}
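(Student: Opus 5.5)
The plan is to build the identification explicitly. Given a separated extension $0\to A\to E\to B\to 0$, choose a section of $E\to B$ in two ways: a lattice section $s_{\ZZ}\colon B_{\ZZ}\to E_{\ZZ}$ (after passing to $\Q$ if necessary, or assuming $B_{\ZZ}$ torsion free, so the sequence of lattices splits), and a Hodge section $s_F\colon B_{\C}\to E_{\C}$ compatible with the Hodge filtration. The class of the extension will be $s_F-s_{\ZZ}$, viewed as an element of $\Hom_{\C}(B,A)$ since its composite with $E\to B$ is zero. The steps are: (1) show that $s_F$ exists; (2) check that the ambiguity in $s_{\ZZ}$ is exactly $\Hom_{\ZZ}(B,A)$ and the ambiguity in $s_F$ is exactly $F^0\Hom_{\C}(B,A)$, so the class is well defined in the stated quotient; (3) show that the resulting map $\Ext_{MHS}(B,A)\to \Hom_{\C}(B,A)/(F^0+\Hom_{\ZZ})$ is injective and surjective and respects the Baer sum.

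For step (1) the separation hypothesis does the work. Since the weights of $A$ lie strictly below those of $B$, for each $k$ at least one of $Gr^W_k A$ and $Gr^W_k B$ vanishes. Hence $W_k E$ equals $W_kA$ for $k$ below the lowest weight $m$ of $B$, and maps isomorphically onto $W_kB$ modulo $A$ for $k\ge m$. Strictness of morphisms of MHS for both $W$ and $F$ then shows that $F^pE_{\C}\to F^pB_{\C}$ is surjective. It remains to get a single $\C$-linear splitting that carries $F^pB$ into $F^pE$ for all $p$ simultaneously. I would do this with Deligne's splitting $I^{p,q}$: on $B_{\C}=\bigoplus I^{p,q}_B$, lift each $I^{p,q}_B$ into $I^{p,q}_E$, which surjects onto it by functoriality and exactness of the $I^{p,q}$. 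This produces $s_F$ with $s_F(F^pB)\subset F^pE$. The weight filtration needs no separate control: $W_kB_{\Q}$ lifts compatibly because the $\Q$-structure is handled by $s_{\ZZ}$, and the relevant difference lands in $A$, all of whose weights are smaller.

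For step (2), two Hodge sections differ by a map $B\to A$ preserving $F$, that is, an element of $F^0\Hom_{\C}(B,A)$. Two integral sections differ by an element of $\Hom_{\ZZ}(B,A)$. I also need $W_0\Hom(B,A)=\Hom(B,A)$, which holds by separation, so every element of the quotient is compatible with the weights. For step (3), if the class vanishes, then after modifying $s_F$ and $s_{\ZZ}$ they coincide, giving a section that is both integral and Hodge-compatible. Such a section is a morphism of MHS, so the extension splits. For surjectivity, given $\phi\in\Hom_{\C}(B,A)$, form $E=A\oplus B$ with the direct-sum lattice and $W$, and Hodge filtration $F^pE=F^pA\oplus(\phi+\mathrm{id})F^pB$. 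Its graded pieces are those of $A$ and $B$, hence pure. Additivity under the Baer sum is a direct check on the sections.

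I expect the main obstacle to be step (1): producing a filtration-compatible complex splitting and justifying that the graded pieces of the constructed $E$ remain pure Hodge structures. Everything else is bookkeeping with the two sections.
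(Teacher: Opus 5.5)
The paper gives no proof of this statement. It quotes Carlson's result and uses it only to identify $\Ext_{MHS}(\ZZ(-p),H^{2p-1}(X))$ with the intermediate Jacobian, so your proof is the only argument on the table.

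Your two-section argument (integral section against $F$-compatible complex section, class $=s_F-s_{\ZZ}$ modulo $F^0\Hom_{\C}(B,A)+\Hom_{\ZZ}(B,A)$) is the standard proof, essentially Carlson's own. It is correct as outlined, including your caveat that $B_{\ZZ}$ must be torsion-free for the integral version.

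Your account of where the hypothesis enters is off, though. Step (1) needs no separation. For any extension, strictness gives $F^pE_{\C}\to F^pB_{\C}$ surjective, and Deligne's $I^{p,q}$ (or simply lifting a graded complement of the finite filtration $F^\bullet B_{\C}$) yields a filtered section. So step (1) is routine, not the main obstacle.

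Separation is actually used in two other places.
\begin{enumerate}
\item Since $W_kE=\pi^{-1}(W_kB)$ for $k\ge m$ and $W_kB=0$ for $k<m$, every section of $E\to B$ is automatically $W$-compatible. This, not the remark that $s_{\ZZ}$ handles the $\Q$-structure, is what lets you call the common section in the injectivity step a morphism of MHS.
\item In the surjectivity step, separation gives $\phi(W_kB_{\C})\subset W_{k-1}A_{\C}$ for every $\phi$. Hence $F^pE_{\C}\cap W_kE_{\C}=F^pA_{\C}\cap W_kA_{\C}$ for $k<m$, while for $k\ge m$ the quotient by $W_{k-1}E\supseteq A$ kills $\phi$. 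So each $Gr^W_kE$ is literally $Gr^W_kA$ or $Gr^W_kB$ with its own Hodge filtration.
\end{enumerate}
For general $A,B$ both points fail, which is why the general form of the formula has $W_0\Hom$ in place of $\Hom$.

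Finally, in step (2) say explicitly that two Hodge sections differ by an element of $F^0\Hom_{\C}(B,A)$ because $A\hookrightarrow E$ is strict, i.e.\ $F^pE_{\C}\cap A_{\C}=F^pA_{\C}$.
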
	 
		
		\subsection{Extensions and the Abel-Jacobi map.} 
	
	Suppose $X$ is a smooth projective variety and $D=\sum a_i D_i$ is a codimensional $p$ cycle. Let $|D|=\cup D_i$   denote the support of the cycle. Let $Y=X-|D|$. Then one has a long exact sequence of cohomology with support 
	$$\dots \rightarrow  H^{2p-1}(X,\ZZ) \rightarrow H^{2p-1}(Y,\ZZ)\rightarrow H^{2p}_{|D|}(X,\ZZ)\rightarrow  H^{2p}(X,\ZZ)\rightarrow \dots$$
	The term $H^{2p}_{|D|}(X,\ZZ)$ is pure of weight $2p$. Hence the cohomology group $H^{2p-1}$ has the weight filtration 
	\begin{itemize}
		\item $W_{2p-2}=\{0\} \subset W_{2p-1}=i^*(H^{2p-1}(X,\Q)\subset W_{2p}= H^{2p-1}(Y,\Q)$
		\item $Gr_{2p-1}^{W_{\bullet}}=W_{2p-1}=i^*(H^{2p-1}(X,\Q)$
		\item $Gr_{2p}^{W_{\bullet}}=H^{2p}_{|D|}(X,\Q)^0$, where 
		$$H^{2p}_{|D|}(X,\Q)^0=\Ker\{H^{2p}_{|D|}(X,\Q) \rightarrow H^{2p}(X,\Q)\}$$
		
	\end{itemize}
	
	If $Z$ is a null-homologous cycle supported on $|D|$ then it determines a copy $\ZZ\cdot Z$ of $\ZZ(-p) \subset H^{2p}_{|D|}(X,\ZZ)$ which maps to $0$ in $H^{2p}(X,\ZZ)$ and  induces a short exact sequence 
	$$0 \rightarrow H^{2p-1}(X,\ZZ) \rightarrow E_Z \rightarrow \ZZ(-p) \rightarrow 0$$
	and $E_Z \in \Ext_{MHS}(\ZZ(-p),H^{2p-1}(X))$. 
	
	This is a separated extension of mixed Hodge structures. From Carlson's theorem we can see that $\Ext_{MHS}(\ZZ(-p),H^{2p-1}(X))$ is the same as the {\bf $p^{th}$ intermediate Jacobian.} 
	
	The {\bf (higher) Abel-Jacobi map} is given by the association 
	$$Z \rightarrow [E_Z] \in \Ext_{MHS}(\ZZ(-p),H^{2p-1}(X)).$$
	In particular, if $p=1$ this is the usual Abel-Jacobi map to the Jacobian of $X$. Hence we have 
	
	\begin{lem}
	A divisor $Z$ of degree $0$ is the divisor of a function if and only if the exact sequence
	$$0 \rightarrow H^1(X) \rightarrow E_Z \rightarrow \ZZ(-1) \rightarrow 0$$
	splits as an extension of MHSs. More generally it is torsion in the Jacobian  if and only if the sequence of $\Q$-MHSs splits.  
	
	\end{lem}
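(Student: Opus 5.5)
The plan is to combine Carlson's theorem with the classical description of the Abel--Jacobi map for curves. For $p=1$, Carlson's theorem identifies
$$\Ext_{MHS}(\ZZ(-1),H^1(X)) \cong H^1(X,\C)/(F^1H^1(X,\C)+H^1(X,\ZZ)).$$
Twisting by $\ZZ(1)$ and using Poincar\'e duality, this becomes
$$H^0(X,\Omega^1)^\vee/H_1(X,\ZZ) = J(X).$$
The first step is to make this identification explicit. I would choose a lift $\tilde 1\in E_Z$ of the generator of $\ZZ(-1)$ that is integral, i.e.\ lies in $E_{Z,\ZZ}$, and a second lift $\tilde 1_F$ lying in $F^1E_{Z,\C}$. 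Their difference is well defined in the quotient above.

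The second step identifies these lifts concretely. Take the local-cohomology class of $Z=\sum a_iP_i$ in $H^2_{|Z|}(X)=\oplus_i \ZZ(-1)$. Under the coboundary map, its integral lift in $H^1(Y,\ZZ)$ is dual to a $1$-chain $\gamma$ with $\partial\gamma=Z$. Its Hodge lift is represented by a meromorphic form of the third kind $\omega_Z$ with simple poles at the $P_i$ and residues $a_i$; this is possible because $\deg Z=0$. Pairing against holomorphic forms, the class of $[E_Z]$ becomes the functional $\eta\mapsto\int_\gamma\eta$ modulo periods. That is exactly the classical Abel--Jacobi image $AJ(Z)\in J(X)$.

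The third step applies Abel's theorem: $AJ(Z)=0$ if and only if $Z=\div(f)$. Since $[E_Z]=0$ in $\Ext$ exactly when the sequence splits, this gives the integral statement. For the rational statement, the map $Z\mapsto[E_Z]$ is a homomorphism, since Baer sums correspond to sums of cycles. Hence the $\Q$-extension $E_Z\otimes\Q$ splits if and only if $[E_Z]$ is torsion in the $\ZZ$-Ext group. This holds because $\Ext_{MHS,\Q}=\Ext_{MHS,\ZZ}\otimes\Q$ in the separated case, which follows from Carlson's formula: tensoring replaces $H^1(X,\ZZ)$ by $H^1(X,\Q)$. Combining this with Abel's theorem, $kZ=\div(f)$ for some $k$ if and only if $AJ(Z)$ is torsion in the Jacobian.

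I expect the main obstacle to be the second step: checking that Carlson's abstract isomorphism, with all its sign and twist conventions, agrees with the integration map. I would handle it by computing the period of $\omega_Z$ against the dual basis directly. The reciprocity law for differentials of the third kind then shows that the difference of the two lifts is $\int_\gamma$ up to sign, and the sign is irrelevant for vanishing or torsion.
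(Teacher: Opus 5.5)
Your proposal is correct and follows the same route as the paper: Carlson's theorem identifies $\Ext_{MHS}(\ZZ(-1),H^1(X))$ with $J(X)$, the class $[E_Z]$ is the Abel--Jacobi image of $Z$, and Abel's theorem finishes the argument. The paper only asserts this (``Hence we have''), so your explicit comparison via forms of the third kind and reciprocity, and your check that $\Ext_{\Q}=\Ext_{\ZZ}\otimes\Q$ kills exactly the torsion of $J(X)$, fill in details the paper leaves implicit.
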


	This argument can be generalized to the following 
	
	\begin{prop} Let $X$ be a smooth projective variety and $|D|=\cup D_i$ a union of codimensional $p$ subvarieties. Let $Y=X-|D|$. Let 
		$$H^{2p}_{|D|}(X,\Q)^0= \Ker\{H^{2p}_{|D|}(X,\Q) \rightarrow H^{2p}(X,\Q)\}$$
		Then any null-homologous divisor $Z$ supported on $D$ is torsion in the $p^{th}$ intermediate Jacobian if and only if the exact sequence 
		$$0 \rightarrow H^{2p-1}(X,\Q) \rightarrow  H^{2p-1}(Y,\Q) \rightarrow H^{2p}_{|D|}(X,\Q)^0 \rightarrow 0$$
	
		splits.  
		\label{splitting}
		
	\end{prop}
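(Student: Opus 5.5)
The plan is to identify the extension attached to a single null-homologous cycle $Z$ as the pullback of the big extension
$$\mathcal{E}:\ 0 \rightarrow H^{2p-1}(X,\Q) \rightarrow H^{2p-1}(Y,\Q) \rightarrow H^{2p}_{|D|}(X,\Q)^0 \rightarrow 0$$
along the inclusion $\Q\cdot Z \cong \Q(-p) \hookrightarrow H^{2p}_{|D|}(X,\Q)^0$. Then we use that $\Ext^1_{MHS}$ is functorial and that a $\Q$-extension of a pure structure $H^{2p}_{|D|}(X,\Q)^0$ by a pure structure of lower weight is determined by its pullbacks to a basis. First, I will check exactness on the left and right. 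The left map is injective because the preceding term $H^{2p-1}_{|D|}(X,\Q)$ vanishes: by purity/semi-purity (Gysin), cohomology with support in a codimension $p$ subset vanishes in degrees $<2p$. This is also implicit in the weight filtration description given above. The right map is surjective onto the kernel by definition. Since $H^{2p}_{|D|}(X,\Q)$ is generated by the fundamental classes of the components $D_i$ (Tate of type $(p,p)$), the group $H^{2p}_{|D|}(X,\Q)^0$ is a direct sum of copies of $\Q(-p)$, spanned by the classes of null-homologous cycles $Z=\sum a_iD_i$.

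Next, for such $Z$, the extension $E_Z\otimes\Q$ of the previous subsection is, by construction, the pullback $Z^*\mathcal{E}$. Here $Z$ is viewed as a morphism $\Q(-p)\to H^{2p}_{|D|}(X,\Q)^0$, and this uses that the map to $\ZZ(-p)$ defining $E_Z$ is the restriction of the long exact sequence. By the lemma above (the $\Q$-version), $Z$ is torsion in the intermediate Jacobian iff $[E_Z]=0$ in $\Ext^1_{MHS}(\Q(-p),H^{2p-1}(X,\Q))$.

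For ($\Leftarrow$): if $\mathcal{E}$ splits via $s$, then $s\circ Z$ splits $Z^*\mathcal{E}$, so every such $Z$ is torsion. For ($\Rightarrow$): choose null-homologous cycles $Z_1,\dots,Z_r$ whose classes form a $\Q$-basis, so $H^{2p}_{|D|}(X,\Q)^0\cong\bigoplus_j \Q(-p)Z_j$. By additivity of $\Ext^1$ in the first variable,
$$\Ext^1\Big(\bigoplus_j\Q(-p),H\Big)=\bigoplus_j\Ext^1(\Q(-p),H),$$
and the class of $\mathcal{E}$ maps to $([Z_j^*\mathcal{E}])_j=([E_{Z_j}])_j$. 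Each component is zero by hypothesis, so $[\mathcal{E}]=0$ and $\mathcal{E}$ splits. Concretely, I will build a splitting by lifting each $Z_j$ via its own splitting and extending linearly; this is a morphism of MHS since it is defined on the generators.

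The main obstacle is the compatibility step: checking carefully that $E_Z$ as defined (a sub-extension of $H^{2p-1}(Y)$, or its pullback) really is $Z^*\mathcal{E}$, and that the torsion statement over $\ZZ$ corresponds to vanishing over $\Q$. For the latter I will invoke Carlson's theorem: $\Ext_{MHS}$ over $\ZZ$ is $J^p(X)$, and tensoring with $\Q$ kills exactly the torsion. So $[E_Z]$ is torsion iff $[E_Z\otimes\Q]=0$, which is the content of the second sentence of the preceding lemma.
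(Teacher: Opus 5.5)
Your proof is correct and follows the paper's argument. For one direction, a splitting of the big sequence is pulled back along $\Q\cdot Z\cong\Q(-p)\hookrightarrow H^{2p}_{|D|}(X,\Q)^0$; for the other, the splittings of the individual $E_Z$ are assembled over a generating set of that kernel. Your choice of a $\Q$-basis of null-homologous cycles as that generating set is more accurate than the paper's differences $D_i-D_j$, which span the kernel only when all the $D_i$ have the same cohomology class (e.g.\ points on a curve, or the $C_I$ on a Fermat hypersurface).
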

	\begin{proof} If $Z$ is a null-homologous cycle supported on $|D|$ then it determines a copy of $\Q(-p)$ in $H^{2p}_{|D|}(X,\Q)$. If the exact sequence is split then the same splitting implies that the class of $E_Z$ is split. Conversely, $H^{2p}_{|D|}(X,\Q)^0$ is generated by $\Q\cdot (D_i-D_j)|i \neq j$ so if $E_{D_i-D_j}$ is trivial for all $i,j$ then the sequence 
			$$0 \rightarrow H^{2p-1}(X,\Q) \rightarrow  H^{2p-1}(Y,\Q) \rightarrow H^{2p}_{|D|}(X,\Q)^0 \rightarrow 0$$
		is split. 
		\end{proof}
		
	For higher codimensional varieties it is not necessarily the case that a cycle which is trivial in the intermediate Jacobian is trivial in the Chow group. Hence from the splitting of the exact sequence one can only conclude that the cycle is $0$ in the intermediate Jacobian or is torsion if the $\Q$-mixed Hodge structure splits. A conjecture of Bloch \cite{blochinjectivity} and Beilinson asserts that if the variety is defined over a number field, such a cycle is necessarily torsion in the Chow group as well.

		\section{Fermat varieties.}
		
		\subsection{Rohrlich's theorem and its generalizations.}
		
	From Proposition \ref{splitting}, 	in order to prove  Rohrlich's theorem we have to prove that the sequence 
	$$0 \rightarrow H^1(F_d,\Q) \rightarrow H^1(F_d-S_d,\Q) \rightarrow H^{2}_{|S_d|}(F_d,\Q)^0 \otimes \Q \rightarrow 0$$ 
	is split as a sequence of mixed Hodge structures. Equivalently we need to find a complement to the image of $H^1(F_d,\Q)$ in $H^1(F_d-S_d,\Q)$ which is a pure Hodge structure of weight $2$.

	We first need some definitions. Let $F^n_d$ be the {\bf Fermat hypersurface of dimension $n$ and degree $d$} in $\CP^{n+1}$:
		$$F^n_d:X_0^d+X_1^d+\dots+X_{n+1}^d=0$$

  For $I=\{i_1,\dots,i_p \} \subset \{0,\dots,n+1\}$ let $C_I=H_{i_1} \cap H_{i_2} \dots \cap H_{i_p} \cap F^n_d$, where $H_k$ is the hyperplane $X_k=0$. $C_I$ is a subvariety isomorphic to the Fermat variety $F_d^{n-p}$ and lies in $CH^p(F_d^n)$. We call an irreducible component of  such a subvariety a {\bf linear subvariety of codimension $p$}. If $n-p>1$ the linear subvarieties are irreducible so its only the $n=p$ case in which there could be components. However, as we also assume that $p=\frac{n+1}{2}$ this leaves us with only the case when $n=1$, namely the case of Rohrlich's theorem that is different. 
  
  We have the following theorem 
  
  \begin{thm} Let $F^n_d$ be the Fermat hypersurface of dimension $n$ and degree $d$. Let $S_d^p=\cup_{|I|=p} C_I$ be the union of all the linear subvarieties of codimension $p$.  Then the exact sequence 
	$$0 \rightarrow H^{2p-1}(F^n_d,\Q) \rightarrow H^{2p-1}(F^n_d-S_d^p,\Q) \rightarrow H^{2p}_{S_d^p}(F_d^n,\Q)^0 \rightarrow 0$$
	is a split exact sequence of Hodge structures. Here 
	$$H^{2p}_{S_d^p}(F_d^n,\Q)^0=\Ker\{H^{2p}_{S_d^p}(F^n_d,\Q) \rightarrow H^{2p}(F^n_d,\Q)\}$$
	is the kernel of the cycle class map and is generated by linear combinations of the linear subvarieties  which are null-homologous.
	\label{nullhomologoustheorem} 
	\end{thm}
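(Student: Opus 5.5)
The strategy follows Elkik: use the large finite symmetry group of $F^n_d$ to split $H^{2p-1}(F^n_d-S_d^p,\Q)$ into isotypic pieces, and show that the pieces meeting the weight $2p-1$ part never meet the weight $2p$ part.

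Let $G=\mu_d^{n+2}/\mu_d$ act on $F^n_d$ by $X_i\mapsto \zeta_i X_i$. Each hyperplane $H_k$ is $G$-stable, so $G$ preserves every $C_I$ and hence $S_d^p$. Therefore $G$ acts on every term of the sequence in the statement, and all the maps are $G$-equivariant morphisms of mixed Hodge structures.

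Because $G$ is a finite abelian group, we can work after extending scalars to $K=\Q(\mu_d)$. There the idempotents $e_\chi=\frac{1}{|G|}\sum_g \chi(g)^{-1}g$, for characters $\chi\in\hat G$, lie in the group algebra and act as endomorphisms of mixed Hodge structures. So every term decomposes as a direct sum of eigenspaces $H_\chi$, and the sequence becomes a direct sum of exact sequences of $K$-mixed Hodge structures, one for each $\chi$.

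\textbf{Key step.} The claim is that no character occurs in both end terms:
\[
\{\chi : H^{2p-1}(F^n_d)_\chi\neq 0\}\ \cap\ \{\chi : (H^{2p}_{S_d^p}(F^n_d)^0)_\chi\neq 0\}=\emptyset .
\]
Given this, for each $\chi$ one of the outer terms is zero. Then $H^{2p-1}(F^n_d-S_d^p)_\chi$ equals either the weight $2p-1$ part or the weight $2p$ part, and the sum of the second kind of eigenspaces is a $G$-stable sub-MHS complementary to $H^{2p-1}(F^n_d)$. It is stable under $\mathrm{Gal}(K/\Q)$, because the Galois group permutes characters while preserving the condition defining the set. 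So it descends to a $\Q$-splitting, and Proposition \ref{splitting} then gives the torsion statement.

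To check the key step I would use two descriptions.

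\emph{Primitive cohomology.} By the classical Weil/Shioda–Katsura description, the primitive cohomology $H^n(F^n_d)_{\mathrm{prim}}$ is $\bigoplus \chi_a$ over $a=(a_0,\dots,a_{n+1})$ with all $a_i\not\equiv 0 \bmod d$ and $\sum a_i\equiv 0$. Since the cohomology of $F^n_d$ is concentrated in the middle degree apart from powers of the hyperplane class, $H^{2p-1}(F^n_d)\neq 0$ forces $2p-1=n$. In that case only these characters occur, and all of them are nontrivial on every factor $\mu_d$.

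\emph{Cohomology with supports.} By purity, $H^{2p}_{S_d^p}(F^n_d)$ is spanned by the fundamental classes of the irreducible components of $S_d^p$; for $n>1$ these are the $C_I$ with $|I|=p$. Each $C_I$ is $G$-invariant, so $G$ acts trivially on $H^{2p}_{S_d^p}$, and only the trivial character appears. In the curve case $n=1$, $p=1$, the components are the $3d$ Fermat points. $G$ permutes these, so the characters that occur are those trivial on the stabilizer of a point. For instance, the points $[0:1:-\zeta]$ with $\zeta^d=1$ have stabilizer containing the factor $\mu_d$ acting on $X_0$, so the characters involved have $a_0=0$ (and similarly for the other two families). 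None of these lie in the set above, where all $a_i\neq 0$. Since $0\notin$ that set either, the two character sets are disjoint.

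\textbf{Main obstacle.} The main difficulty is the degenerate dimensions. I must make sure that $H^{2p-1}(F^n_d-S_d^p)$ has no weight contributions other than the two described, which is the exactness on the left: $H^{2p-1}_{S_d^p}=0$ by purity, as the support has codimension $p$. I must also handle the case $2p-1\neq n$. There $H^{2p-1}(F^n_d)=0$ and the statement is trivially split, although the cycle class map is then nonzero on $H^{2p}_{S_d^p}$, which only shrinks the kernel. For the case $2p-1=n$ I need to confirm the character description of $H^n_{\mathrm{prim}}$; I would cite it (Shioda–Katsura, or deduce it by the inductive structure) rather than reprove it.
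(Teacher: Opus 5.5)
Your proposal is correct and is essentially the paper's own argument. The paper's operator $T$ is the averaging projector $\frac{1}{|G_d^n|}\sum_{g}g$ onto the trivial isotypic part when $n>1$, and the sum of the character projectors $p_\chi$ over the sets $B_i$ when $n=1$; either way it acts on $H^{2p-1}(F^n_d-S_d^p,\Q)$ just like your Galois-stable idempotent $\sum_{\chi\notin A_d^n}e_\chi\in\Q[G_d^n]$, killing $H^{2p-1}(F^n_d,\Q)$ by Shioda's theorem and acting invertibly on the weight-$2p$ quotient, so its image is the required complement. Your version is somewhat more careful than the paper's (left exactness by semi-purity, the trivial case $2p-1\neq n$, descent from $\Q(\mu_d)$ to $\Q$), and the only slip is cosmetic: for even $d$ the Fermat points on $X_0=0$ are $[0:1:\eta]$ with $\eta^d=-1$, not $[0:1:-\zeta]$ with $\zeta^d=1$, which does not change the stabilizer computation.
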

	
	We get the following corollary: 
 \begin{cor}
 	\label{nullhomologouscorollary}
	Any null homologous cycle supported on the set of linear subvarieties  is torsion in the intermediate Jacobian. 
\end{cor}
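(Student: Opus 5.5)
The corollary should follow from Theorem \ref{nullhomologoustheorem} combined with Proposition \ref{splitting}. Write $X=F^n_d$ and $|D|=S^p_d$, the union of the codimension $p$ linear subvarieties $C_I$ (for $n=1$, their irreducible components, the Fermat points). Let $Z=\sum a_I C_I$ be a null-homologous cycle supported on $S^p_d$. Its class $[Z]\in H^{2p}_{S^p_d}(F^n_d,\Q)$ is the combination of the fundamental classes of the components. Because $Z$ is null-homologous, $[Z]$ lies in the kernel $H^{2p}_{S^p_d}(F^n_d,\Q)^0$, so it spans a sub-Hodge structure $\Q\cdot Z\cong\Q(-p)$ of that kernel.

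Pulling back the sequence of Theorem \ref{nullhomologoustheorem} along the inclusion $\Q(-p)\hookrightarrow H^{2p}_{S^p_d}(F^n_d,\Q)^0$ gives exactly the rational extension
$$0\rightarrow H^{2p-1}(F^n_d,\Q)\rightarrow E_Z\otimes\Q\rightarrow \Q(-p)\rightarrow 0 .$$
Explicitly, $E_Z\otimes\Q$ is the preimage of $\Q\cdot Z$ in $H^{2p-1}(F^n_d-S^p_d,\Q)$. Theorem \ref{nullhomologoustheorem} supplies a splitting $s:H^{2p}_{S^p_d}(F^n_d,\Q)^0\rightarrow H^{2p-1}(F^n_d-S^p_d,\Q)$ as a morphism of mixed Hodge structures. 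Its restriction to $\Q\cdot Z$ lands in $E_Z\otimes\Q$ and splits the pulled-back sequence, so the rational extension class vanishes. This is precisely the ``if'' direction of Proposition \ref{splitting}.

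It remains to pass from $\Q$ to $\ZZ$. By Carlson's theorem, $\Ext_{MHS}(\ZZ(-p),H^{2p-1}(X))$ is the intermediate Jacobian $J^p$, and
$$\Ext_{MHS}(\Q(-p),H^{2p-1}(X,\Q))=J^p\otimes\Q .$$
The class $[E_Z]$ maps to $[E_Z\otimes\Q]$ under this map, and we have shown the latter is $0$. The kernel of $J^p\rightarrow J^p\otimes\Q$ is the torsion subgroup, since $J^p$ is a divisible abelian group (a complex torus) and $-\otimes\Q$ kills exactly torsion. Hence $[E_Z]$, which is the Abel-Jacobi image of $Z$, is torsion.

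The only step needing care is checking that $E_Z\otimes\Q$ really coincides with the pullback of the sequence in Theorem \ref{nullhomologoustheorem}. This holds because both are defined via the same localization sequence for $(F^n_d,S^p_d)$, restricted to the line through $[Z]$, so the argument is formal once the theorem is granted.
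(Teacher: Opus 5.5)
Your proposal is correct and follows the same route as the paper: the paper deduces the corollary directly by feeding the splitting of Theorem \ref{nullhomologoustheorem} into the ``if'' direction of Proposition \ref{splitting}, which amounts to pulling the split sequence back along $\Q\cdot Z\cong\Q(-p)$. Your extra step, which uses Carlson's description to pass from the vanishing rational extension class to torsion in $J^p$, makes explicit what the paper leaves to its lemma on splittings of $\Q$-mixed Hodge structures.
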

In particular, when $n=1$ and $p=1$ this is Rohrlich's theorem. Note that if $n>1$ the linear subvarieties are all irreducible but in the case of curves they are not.

In order to prove the splitting we have to understand the cohomology of $F_d^n$. For that we follow Shioda \cite{shio}, who proved the Hodge conjecture for such varieties under some conditions. 

Let $\mu_d$ denote the group of $d^{th}$ roots of unity.  The group $\mu_d^{n+2}$ acts on $F_d^n$ by 
$$(\zeta_0,\dots,\zeta_{n+1})\cdot [X_0:\dots:X_{n+1}]=[\zeta_0X_0:\dots:\zeta_{n+1}X_{n+1}]$$
The diagonal acts trivially. Let $G_d^n=\mu_d^{n+2}/(diag)\simeq \mu_d^{n+1}$. The dual group of $G_d^n$  can be identified with 
$$\hG^n_d=\{\chi=(a_0,\dots,a_{n+1})|a_i \in \ZZ/d\ZZ \text{ such that } \sum a_i=0 \mod d\}$$ 
$$\chi((\zeta_0,\dots,\zeta_{n+1}))=\prod \zeta_i^{a_i}.$$

The action of $G^n_d$ on the Fermat varieties induces an action on related cohomology groups. Since $G^n_d$ also preserves $S_d^p$ it also induces a group action on the cohomology group $H^{2p}_{S_d^p}(F_d^n,\ZZ)$.

\subsection{$G_d^n$-module structure of $H^n(F_d^n,\C)$.}

We want to understand the decomposition of the cohomology groups $H^i(F^n_d,\Q)$, $H^i(F^n_d-S_d^p,\Q)$ and $H^i_{S_d^p}(F_d^n,\Q)$ as $G_d^n$ representations.

From the Lefschetz hyperplane theorem we know that 
$$H^i(F_d^n,\Q) = \begin{cases} \{0\} & i\neq n \text{ odd}\\
								\Q & i\neq n \text{ even} \end{cases}$$
Hence the only `interesting' cohomology is when $i=n$. 

Recall that the primitive cohomology is the orthogonal complement of the hyperplane class. From above, all the primitive cohomology groups are trivial for $i\neq n$. When $n$ is odd, $H^n_{prim}(F_d^n,\Q)\simeq H^n(F_d^n,\Q)$ but when $n$ is even the primitive cohomology is of codimension $1$.

For a character $\chi=\chi(a_0,a_1,\dots,a_{n+1})$, let $V(\chi)$ denote corresponding  $1$-dimensional representation.  Shioda has the following theorem 
 
\begin{thm}[Shioda] Let $A_d^n \subset \hG_d^n$ be the subset 
	$$A_d^n=\{\chi=\chi(a_0,\dots,a_{n+1})|\, a_i \neq 0 \text{ for all } i\}$$
	we have 	
 $$H^n_{prim}(F_d^n,\C)\simeq \bigoplus_{\chi \in A^n_d} V(\chi)$$ 		
	
\label{shiodatheorem}
	\end{thm}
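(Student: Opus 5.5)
The plan is to realize $F_d^n$ as a quotient of a hyperplane complement and decompose the cohomology of that open piece into eigenspaces for the group action. Consider the open subset $U=F_d^n-(H_{n+1}\cap F_d^n)$, an affine hypersurface $x_0^d+\dots+x_n^d=-1$ in $\AB^{n+1}$. It carries the Galois covering action of $G_d^n\simeq\mu_d^{n+1}$ acting on the affine coordinates $x_0,\dots,x_n$. Since all groups involved are finite and we work over $\C$, every cohomology group splits into eigenspaces $V(\chi)$ for $\chi\in\hG_d^n$. The aim is to identify which $\chi$ occur in $H^n_{prim}(F_d^n,\C)$, each with multiplicity one.

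\textbf{Step 1: count dimensions.} I will compute the Euler characteristic, or directly $\dim H^n_{prim}$, by the standard formula for smooth hypersurfaces:
$$\dim H^n_{prim}(F_d^n)=\frac{(d-1)^{n+2}+(-1)^n(d-1)}{d}.$$
This equals $|A_d^n|$. Indeed, the number of $(a_0,\dots,a_{n+1})$ with all $a_i\in(\ZZ/d)^\times$-nonzero and $\sum a_i=0$ satisfies the recursion $N_{m}=(d-1)^{m-1}-N_{m-1}$, which gives exactly this closed form. So it suffices to show that each $\chi\in A_d^n$ occurs with multiplicity at least one and that no $\chi\notin A_d^n$ occurs.

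\textbf{Step 2: exclude characters with a zero entry.} Suppose $a_i=0$ for some $i$. Then the $\chi$-eigenspace consists of classes invariant under the subgroup $\mu_d$ acting on the $i$-th coordinate, so it is a subspace of $H^n(F_d^n/\mu_d)$. The quotient by scaling $X_i$ is $\CP^n$, via the projection $[X]\mapsto[X_0:\dots:\widehat{X_i}^{\,d}\text{-eliminated}]$. More precisely, $F_d^n\to\CP^n$ forgets $X_i$ and is a $d$-fold cyclic cover branched along the Fermat $F_d^{n-1}$. Hence the invariant part is $H^n(\CP^n,\C)$, which contains only the power of the hyperplane class and nothing primitive. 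So the $\chi$-part of $H^n_{prim}$ vanishes whenever some $a_i=0$, including $\chi=0$.

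\textbf{Step 3: show each $\chi\in A_d^n$ occurs.} Combined with Step 1, this finishes the proof. Here I would use Griffiths' residue description: $H^n_{prim}(F_d^n,\C)$ is spanned by the residues of
$$\frac{X^{\beta}\,\Omega}{(\sum X_i^d)^{q+1}},\qquad \Omega=\sum(-1)^iX_i\,dX_0\wedge\cdots\widehat{dX_i}\cdots\wedge dX_{n+1},$$
with the monomials $X^\beta$ running over the Jacobian ring $\C[X]/(X_i^{d-1})$. Hence $0\le\beta_i\le d-2$ and $\deg=(q+1)d-(n+2)$. Such a form transforms by the character $a_i\equiv\beta_i+1$, which is nonzero mod $d$, and the degree condition is exactly $\sum a_i\equiv0$. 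This gives a bijection between monomial basis elements and $A_d^n$, so each $V(\chi)$ with $\chi\in A_d^n$ appears exactly once.

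This Griffiths-residue step is the main technical input, and it in fact makes Step 1 redundant. If one prefers to avoid it, the obstacle becomes proving nonvanishing of each eigenspace directly. The fallback would be a Lefschetz fixed point or trace computation using Step 2 inductively on the branched cover $F_d^n\to\CP^n$.
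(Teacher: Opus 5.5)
The paper gives no proof of this statement. It quotes the result from Shioda \cite{shio} and uses it as a black box, so there is nothing internal to match, and your proposal has to stand as a self-contained proof. It does, and it is correct.

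\begin{itemize}
\item Step 1 is right: the primitive Betti number formula and the recursion $N_m=(d-1)^{m-1}-N_{m-1}$ both check out.
\item Step 2 is right. The quotient of $F_d^n$ by the $\mu_d$ scaling $X_i$ is $\CP^n$, so the invariants are $0$ or spanned by $H^{n/2}$. Since $H^{n/2}\cdot H^{n/2}=d\neq0$, this meets $H^n_{prim}$ trivially.
\item Step 3 is the standard Griffiths computation. For $f=\sum X_i^d$ the Jacobian ideal is the monomial ideal $(X_0^{d-1},\dots,X_{n+1}^{d-1})$, and the residue map is equivariant. The form $X^\beta\Omega/f^{q+1}$ has character $a_i=\beta_i+1\in\{1,\dots,d-1\}$. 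The bound $n+2\le\sum a_i\le(n+2)(d-1)$ forces $0\le q\le n$. So you get a basis of $H^n_{prim}$ by eigenvectors whose characters run bijectively over $A_d^n$. Whether you get $\chi$ or $\chi^{-1}$ depends on convention, which is harmless since $A_d^n=-A_d^n$.
\end{itemize}

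Step 3 alone makes Step 2 redundant, not just Step 1. The affine chart $U$ in your opening paragraph is never used.

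Two comparisons are worth recording. First, your residue route gives more than the quoted statement. It places each $V(\chi)$ in $H^{n-q,q}$ with $(q+1)d=\sum a_i$, taking representatives $1\le a_i\le d-1$. That Hodge-type information is what one needs to locate Hodge classes, which is the point of Shioda's paper, but this paper never uses it.

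Second, the paper needs much less than the full theorem. In the proof of Theorem \ref{nullhomologoustheorem}, the only input is that no character with some $a_i=0$ occurs in $H^{2p-1}(F_d^n)$.
\begin{itemize}
\item For $n>1$ this is the absence of the trivial character, which is what makes $T$ kill $H^{2p-1}(F_d^n,\Q)$.
\item For $n=1$ it is what the disjointness $B_i\cap A_d^n=\emptyset$ is used for.
\end{itemize}
That is exactly your elementary Step 2. So for this paper's application the quotient $F_d^n/\mu_d\cong\CP^n$ already suffices, with no need for multiplicity one or Griffiths' theorem.
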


	Null-homologous cycles of codimension $p$  determine an extension in 
	$$\Ext^1_{MHS}(\Q(-p), H^{2p-1}(F_d^n,\Q)).$$
	If $2p-1 \neq n$ then the cohomology group is trivial. So we can restrict ourselves to the case when $n=2p-1$ is odd  and in this case $H^n_{prim}=H^n$. In the section on motivic cycles we will need to consider the case when $n$ is even.

This describes the action on the weight $(2p-1)$ part of the Hodge structure of $H^{2p-1}(F_d^n-S_d^p,\Q)$. 

\subsection{$G_d^n$-module structure of $H^{2p}_{S_d^p}(F_d^n,\Q)$.}

To understand the $Gr_{2p}^{W_{\bullet}}$ part of the Hodge structure we have to understand the $G_d^n$ action on $H^{2p}_{S_d^p}(F_d^n,\Q)$. This is the free $\Q$-vector space generated by the classes of $C_I$ in $H^{2p}(F_d^n,\Q)$, where $I=\{i_0,\dots,i_{p-1}\}$ is a subset of $\{0,\dots,n+1\}$ of cardinality $|I|=p$. Recall that we can assume $p=(n+1)/2$. 

There are two cases:

\noindent Case 1: $n=1$. Here the linear subvarieties  are the cuspidal points. Let $P$ be a cuspidal point in $C_I$ for some $I$. Here $C_I=C_{\{i\}}$ for  $i\in \{0,1,2\}$ so we will just use $i$. 

For any $P$ in $C_i$, the subgroup  
$$G_P=1\times\dots \times \mu_d \times \dots 1$$
with $\mu_d$ in the $i$ coordinate fixes $P$. The orbit of $P$ under the group $G_d^1$ is $C_i$. Note that $G_P$ is the same for all $P$ in $C_i$,  so we denote it by $G_i$. 

We would like to understand the representation $V_I$ corresponding to this orbit. Since $G_P=G_i$ acts trivially on $P$, the representation is 
$$V_i=\Ind_{G_i}^{G_d^1} ({\mathds 1})$$
From  Frobenius reciprocity the representation decomposes into those characters which restrict to the trivial character of $G_i$ 
$$V_i=\oplus_{\chi|_{G_i}={\mathds 1}} V(\chi)$$
These are those characters where $a_i=0$.

Let $B_i$ be the set of characters such that $a_i=0$. 
$$H^{2}_{S_d^1}(F_d^1,\Q)=\bigoplus_{i=0}^2 \bigoplus_{\chi \in B_i } V(\chi)$$
Note that $B_i \cap A_d^n=\emptyset$ for all $i$. 

\vspace{\baselineskip}

\noindent Case 2:$n >1$. In this case the $C_I$ are irreducible. $G_d^n$ preserves $C_I$ for any $C_I$ so the action on the class of $C_I$ is trivial. Hence $G_d^n$ acts trivially on $H^{2p}_{S_d^p}(F_d^n,\Q)$.

\subsection{Splitting of the Hodge Structure.}

We can now complete the proof of Theorem \ref{nullhomologoustheorem}.

\begin{proof} We first deal with the case $n>1$. Consider the element 
	$$T=\frac{1}{|G_d^n|} \sum_{g \in G_d^n} g$$
acting on $H^{2p-1}(F^n_d-S_d^p,\Q)$. Since 
$$H^{2p-1}(F_d^n,\C)\simeq \bigoplus_{\chi \in A^n_d} V(\chi)$$
for  $v \in V(\chi)$, 
 $$T(v)= \frac{1}{|G_d^n|}    \sum_{g \in G_d^n}  g\cdot v = \frac{1}{|G_d^n|}  \sum_{g \in G_d^n} \chi(g)v=0$$
 since $\chi$ is a non-trivial character. 
 
Define a map
$$ H^{2p}_{S_d^p}(F_d^n,\Q) \stackrel{\phi}{\longrightarrow} H^{2p}_{S_d^p}(F_d^n,\Q)$$
$$ \phi(v_0)=T(v) \text{ where } v \in \partial^{-1}(v_0)$$
This is well defined as if $v, v'\in \partial^{-1}(v_0)$, $v-v' \in H^{2p-1}(F_d^n,\Q)$ so $T(v-v')=-0$.

 Hence $H^n(F_d^n,\C) \subset \Ker(T)$. $T$ is also a morphism of Hodge structures as it is induced by an automorphism of $F_d^n$. Further, since $G_d^n$ acts trivially on  $H^{2p}_{S_d^p}(F_d^n,\Q)$, $T$ acts by multiplication by $|G_d^n|=d^{n+1}$. The image of $T$ is a pure Hodge substructure of weight $2p$ of $H^{2p-1}(F^n_d-S_d^p,\Q)$ and is isomorphic to $H^{2p}_{S_d^p}(F_d^n,\Q)$ and complementary to $H^{2p-1}(F^n_d,\Q)$. Hence it provides a splitting of the exact sequence.

 When $n=1$ the argument is similar. Let 
 $$ T=\sum_{|I|=p} \sum_{\chi \in B_I} p_{\chi}$$
 where $p_{\chi}$ is the projector corresponding to the character $\chi$. 
 Since $B_I \cap A_d^n$ are disjoint $T$ projects on to the weight two piece of the Hodge structure on $H^1(F_d^n-S_d^1,\C)$ 
 
 \end{proof}
 
 \begin{rem} The case when $n=2p-1$ is the only case where one expects potentially interesting cycles. For instance $CH^1_{hom}(F_d^1), CH^2_{hom}(F_d^3), CH^3_{hom}(F_d^5)$ and in fact there do exist interesting non-torsion cycles in these groups as we will see in Section \ref{nontorsion}.

\end{rem}

\section{Motivic cycles.}

A motivic cycle, or equivalently a higher Chow cycle, in the group $CH^p(X,q)=H^{2p-q}_{\M}(X,\ZZ(p))$ is  a codimensional $p$ cycle on $\Delta^q X=X \times \Delta^q$, where $\Delta^q$ is the $q$-simplex 
$$\Delta^q=\{(t_0,\dots,t_q) \in \AB^{q+1}| \sum t_i=1\}$$ 
subject to certain conditions with respect to the faces \cite{blocACKT,scho}.

In some cases these cycles  can be represented by 
$$\sum (Z_i,f_{i_1},f_{i_2},\dots f_{i_q})$$ 
where $Z_i$ are codimensional $p$ cycles on $X \times \AB^q$ and $f_{i_j}$ are functions on $Z_i$ subject to the vanishing of the tame symbol of the functions.  In general, such a cycle will determine a higher Chow cycle but it is not necessarily the case that all higher Chow cycles have such a presentation. However in what follows we will only consider cycles of this type.

A special case is when $q=1$. A cycle in the group $CH^p(X,1)=H^{2p-1}_{\M}(X,\ZZ(p))$ is represented by a sum 
$$\sum (Z_i,f_i)$$
where $Z_i$ are subvarieties of codimensional $p-1$ and $f_i$ are functions on $C_i$ subject to the condition that $\sum \div(f_i)=0$.  We will primarily focus on this group though our arguments should work in general. 

\subsection{Fermat varieties.}

In the case when $X$ is a Fermat hypersurface we can use Corollary \ref{nullhomologouscorollary} to construct  motivic cycles.  

Let  $Z$ be a linear subvariety of the type $C_I$ for some $I \subset \{0,1,\dots,n+1\}$ as before. Let $\{J\}$ be the sets such that $I \subset J$ and  $|J|=|I|+1$. Any null homologous divisor on $C_I$ supported  on the $C_J$ is torsion in the Jacobian as if $\dim(C_I)>1$ then $H^1(F^n_d,\Q)=\{0\}$ and if $\dim(C_I)=1$ we can apply Rohrlich's theorem. Hence it is not very difficult to find functions. 

Using these functions we can construct higher Chow cycles. For instance, in the group $CH^p(X,1)$ a cycle is represented by a sum $\sum (Z_i,f_i)$ with $\sum \div(f_i)=0$. We can construct cycles as follows.

Let $I=\{i_1,\dots,i_{p-2}\}$ be a subset of $\{0,\dots,n+1\}$. Let $a,b,c$ be three elements of the complement of $I$ in $\{0,\dots,n+1\}$ and consider the linear subvarieties of the type  $C_{a}=C_{I \cup \{a\}}$  and $C_{ab}=C_{I \cup \{a,b\}}$. Note that the linear subvarieties $C_a$ and $C_b$ meet at $C_{ab}=C_{ba}$.

On $C_a$ we have a function $f_{bc}$ where 
	$$\div(f_{bc})=C_{ab}-C_{ac}$$
	and similar functions on $C_b$ and $C_c$. 
	
Then 
$$(C_a,f_{bc})+(C_b,f_{ca})+(C_c,f_{ab})$$ 
is an element of $CH^{p}(F_d^n,1)$ as 
$$\div_{C_a}(f_{bc})+\div_{C_b}(f_{ca})+\div_{C_c}(f_{ab})$$
$$ C_{ab}-C_{ac}+C_{bc}-C_{ba}+C_{ca}-C_{cb}=0$$
More generally one can construct cycles in $CH^p(F_d^n,q)$ along the same lines. Ross \cite{ross} constructed cycles in the group $CH^2(F_d^1,2)$ using pairs of functions supported on the Fermat points. 

There are some trivial elements in the higher Chow groups of the form $(Z,a_1,\dots,a_q)$ where $Z$ is a linear subvariety and $a_i$ are constant functions - which trivially satisfy the co-cycle conditions. Cycles like these are called {\bf decomposable} cycles as they come from the product structure on higher Chow groups. We do not want to consider such cycles.

\subsection{Extensions and motivic cycles.}

Analogously to the situation of null-homologous cycles one can associate an extension to a motivic cycle \cite{scholl}.

A conjecture of Beilinson asserts that if $X$ is a smooth projective variety then 
$$CH^p(X,q)_{\Q}=H^{2p-q}_{\M}(X,\Q(p))=\Ext_{\M}(\Q(-p),h^{2p-q-1}(X,\Q))$$
where $\M$ is the conjectured category of mixed motives. 

It is known that in the category of mixed Hodge structures, 
$$\Ext_{MHS}(\Q(-p),H^{2p-q-1}(X,\Q))=H^{2p-q}_{\D}(X,\Q(p)).$$
The term on the right is the Deligne cohomology group which is the analogue of the Jacobian and there is a map called the regulator map from motivic cohomology to Deligne cohomology analogous to the Abel-Jacobi map. 

A motivic cycle, therefore, determines an extension in the conjectural category of mixed motives as well as determines an extension in the category of mixed Hodge structures and the realization in the category of mixed Hodge structures is the regulator map. 

To see how one associates an extension to a motivic cycle we follow Scholl \cite{scholl}, Section 3.1. Let $\Sigma^qX$ be the strict augmented simplicial scheme over $X$. 
$$\Delta^q X \leftarrow \bigsqcup^{[0,q]} \Delta^{q-1}X \mathrel{\substack{\leftarrow \\ \leftarrow}} \bigsqcup^{[1,q]} \cdots  \mathrel{\substack{\leftarrow \\ \cdot \\ \cdot  \\ \leftarrow}} \bigsqcup^{[q-1,q]} \Delta^0 X   $$
Higher Chow groups or motivic cohomology groups can be defined using these schemes. For any suitable cohomology theory $h^*_{\_}(\_)$ that extends to simplicial schemes with support, one has an exact sequence 
$$0 \rightarrow h^{2p-1}(\Sigma^q X) \rightarrow h^{2p-1}(\Sigma^q X-|D|_{\bullet})\rightarrow h^{2p}_{|D|_{\bullet}}(\Sigma^q X) \rightarrow h^{2p}(\Sigma^q X) $$
where for a higher Chow cycle $y$, $|D|$ denotes its support and $|D|_{\bullet}$ the closed simplicial subscheme of $\Sigma^q X$ determined by $|D|$. 

Scholl shows that using the tensor category of mixed realizations $\MMR$ of Deligne and Jannsen \cite{jann} it is possible to define mixed realizations of simplicial schemes with smooth components with supports and hence we can assume that $h^*_{\_}(\_)$ is the realization in the category of mixed realizations. One further has  \cite{scholl}, Section 3.1,
$$h^{*}(\Sigma^q X) \simeq h^{*-q}(X)$$ 
The cycle $y$ determines a copy of $\Q(-p)$ in $h^{2p}_{|D|_{\bullet}}(\Sigma^q X)$ which maps to $0$ in $h^{2p}(\Sigma^q X)$. So a higher Chow cycle determines a short exact sequence 
$$0 \rightarrow h^{2p-1-q}(X) \rightarrow E_y \rightarrow \Q(-p) \rightarrow 0$$
and in particular a class in $\Ext^1_{\MMR}(\Q(-p),h^{2p-1-q}(X))$.

We now have an analogue of Proposition \ref{splitting}

	\begin{prop} Let $X$ be a smooth projective variety and $|D|=\cup D_i$ a union of codimensional $p$ subvarieties of $\Delta^q(X)$. Let $|D|_{\bullet}$ be the simplicial subscheme of $\Sigma^q X$ induced by $|D|$ 
	$$H^{2p}_{|D|_{\bullet}}(\Sigma^q X,\Q)^0= \Ker\{H^{2p}_{|D|_{\bullet}}(\Sigma^q X,\Q) \rightarrow H^{2p}(\Sigma^q X,\Q)\}$$
	Then any higher cycle  $Z$ in $Z^p(X,q)$ supported on $|D|$ is torsion in the Deligne cohomology $H^{2p-q}_{\D}(X,\Q(p)$  if the exact sequence 
	$$0 \rightarrow H^{2p-1}(\Sigma^q X,\Q) \rightarrow  H^{2p-1}(\Sigma^q X - Y_{\bullet},\Q) \rightarrow H^{2p}_{|D|}(\Sigma^q X,\Q)^0 \rightarrow 0$$
	splits.  
	
	\label{highersplitting} 
	
\end{prop}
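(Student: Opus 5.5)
The plan is to imitate the proof of Proposition \ref{splitting} in the simplicial setting, working with Hodge realizations. The cycle $Z$ (a higher Chow cycle supported on $|D|$) determines, via its cycle class with supports, a copy of $\Q(-p)$ inside $H^{2p}_{|D|_{\bullet}}(\Sigma^q X,\Q)$. Because $Z$ is a cycle in $Z^p(X,q)$ (its faces cancel), this class maps to $0$ in $H^{2p}(\Sigma^q X,\Q)$. Concretely, $\Q\cdot[Z]$ is a sub-Hodge structure of $H^{2p}_{|D|}(\Sigma^q X,\Q)^0$, pure of type $(p,p)$.

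First I record that the extension $E_Z$ attached to $Z$ by Scholl's construction is the pullback of the given sequence
$$0 \rightarrow H^{2p-1}(\Sigma^q X,\Q) \rightarrow H^{2p-1}(\Sigma^q X - |D|_{\bullet},\Q) \rightarrow H^{2p}_{|D|}(\Sigma^q X,\Q)^0 \rightarrow 0$$
along the inclusion $\Q(-p)\cdot[Z] \hookrightarrow H^{2p}_{|D|}(\Sigma^q X,\Q)^0$. Here I use the identification $H^{2p-1}(\Sigma^q X)\simeq H^{2p-1-q}(X)$ from \cite{scholl}, Section 3.1, so that $E_Z$ defines a class in $\Ext^1_{MHS}(\Q(-p),H^{2p-q-1}(X,\Q))$.

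The main step is then formal. Suppose $s: H^{2p}_{|D|}(\Sigma^q X,\Q)^0 \to H^{2p-1}(\Sigma^q X-|D|_{\bullet},\Q)$ is a splitting in the category of $\Q$-MHS. Restricting $s$ to $\Q(-p)\cdot [Z]$ gives a morphism of MHS $\Q(-p)\to E_Z$ that lifts the identity, so $E_Z$ splits over $\Q$. Pullback of extensions is functorial in $\Ext^1$, which justifies this. Hence the class of $E_Z$ vanishes in $\Ext^1_{MHS}(\Q(-p),H^{2p-q-1}(X,\Q))\otimes\Q$.

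Finally I use the identification $\Ext_{MHS}(\Q(-p),H^{2p-q-1}(X,\Q))=H^{2p-q}_{\D}(X,\Q(p))$ recalled above, under which the class of $E_Z$ is the regulator of $Z$. Triviality of the rational extension therefore means that the regulator is torsion in Deligne cohomology (working integrally), or zero (working rationally).

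The step I expect to be the main obstacle is justifying that the extension obtained from the simplicial cohomology-with-supports sequence really is the one whose class equals the regulator, i.e. the compatibility of Scholl's construction with the Deligne-cohomology regulator. I would simply cite \cite{scholl}, Section 3, for this compatibility rather than reprove it. A secondary point is that the sequence must be exact on the right, i.e. surject onto the kernel $H^{2p}_{|D|}(\Sigma^q X,\Q)^0$. This is immediate from the long exact sequence, since the image of $H^{2p-1}(\Sigma^q X - |D|_{\bullet})$ in $H^{2p}_{|D|}$ is exactly the kernel of the map to $H^{2p}(\Sigma^q X)$. Only the ``if'' direction is claimed, so no converse argument about generators is needed.
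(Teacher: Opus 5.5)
Your argument is correct and is the same as the paper's. The cycle gives a copy of $\Q(-p)$ in $H^{2p}_{|D|_{\bullet}}(\Sigma^q X,\Q)^0$, and $E_Z$ is the pullback of the given sequence along it. A splitting of that sequence then restricts to a splitting of $E_Z$, whose class is the regulator under $\Ext_{MHS}(\Q(-p),H^{2p-q-1}(X,\Q))=H^{2p-q}_{\D}(X,\Q(p))$.

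One small correction concerns the vanishing of $[Z]$ in $H^{2p}(\Sigma^q X,\Q)$. The face condition on $Z$ is what makes $[Z]$ a class in $H^{2p}_{|D|_{\bullet}}(\Sigma^q X,\Q)$ in the first place. Its vanishing in $H^{2p}(\Sigma^q X,\Q)\simeq H^{2p-q}(X,\Q)$ for $q\geq 1$ comes instead from weights: there is no nonzero morphism from $\Q(-p)$ to a pure Hodge structure of weight $2p-q$.
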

\begin{proof} The proof is similar to the earlier case. If $y$ is a higher Chow cycle in the
	group $CH^p(X,q)=H^{2p-q}_{\M}(X,\Q(p))$ supported on $|D|$ then it determines a copy 
	 of $\Q(-p)$ in $H^{2n}_{|D|}(\Sigma^q X,\Q)$ and its pull-back determines an extension 
	 $E_y$ in $\Ext(\Q(-p),H^{2p-q-1}(X))$. If the exact sequence is split then the same 
	 splitting implies that the class $E_y$ is split. 
\end{proof}

\subsection{Motivic cycles on Fermat Varieties.}

We now show that any motivic cycle supported on the linear hypersurfaces of a Fermat variety have regulator which is torsion in the Deligne cohomology. This is the higher Chow version of Rohrlich's theorem. The idea, as before, is to show that in this case the exact sequence in Proposition \ref{highersplitting} splits. 

We consider cycles in the group $CH^p(F_d^n,q)$ with $p-q \geq 0$ and $n>1$. The cycles $y$ are of the following type:
$$y=\sum (Z_I,f_{I_1},\dots, f_{I_q})$$
where $Z_I$ are linear subvarieties of codimension $p-q$ corresponding to a subset $I$ of $\{0,\dots,n+1\}$ and $f_{I_k}$ are functions on the $Z_I$ such that their divisors are supported on cycles of the form $Z_{I\cup \{j\}}$ for some $j\in \{0, \dots,n+1\}, j \notin I$ and further they satisfy a co-cycle condition ensuring that the sum is in the motivic cohomology group. 

We want to discount the decomposable cycles coming from linear subvarieties. In order to do that we work with primitive cohomology.

The group $G_d^n$ stabilizes $Z_I$ and the support of the divisors of $f_{I_j}$ as they are all linear subvarieties. Hence similar to before since $n>1$ the group acts trivially on the classes of a higher cycle  $y=\sum_I (Z_I,f_{I_i},\dots,f_{I_q})$ in  $h^{2p}_{|D|_{\bullet}}(\Sigma^q X)$, where $|D|_{\bullet}$ is the simplicial subscheme determined by cycles of the type above. 

\begin{thm} Let $F^n_d$ be the Fermat hypersurface of dimension $n$.
	$$F^n_d:X_0^d+X_1^d+\dots+X_{n+1}^d=0$$
	For $I=\{i_1,\dots,i_p \} \subset \{0,\dots,n+1\}$ let $C_I=H_{i_1} \cap H_{i_1} \dots \cap H_{i_p} \cap F^n_d$, where $H_k$ is the hyperplane $X_k=0$. $C_I$ is a subvariety isomorphic to the Fermat variety $F_d^{n-p}$. Let $S_d^p=\bigcup_{|I|=p} C_I$.

	Then the exact sequence 
	$$0 \rightarrow H^{2p-1}_{prim}(\Sigma^q F^n_d,\Q) \rightarrow H^{2p-1}_{prim}(\Sigma^q F^n_d-S_d^p,\Q) \rightarrow H^{2p}_{prim,S_d^p}(F_d^n,\Q)^0 \rightarrow 0$$
	is a split exact sequence of Hodge structures. Here 
	$$H^{2p}_{prim,S_d^p}(F_d^n,\Q)^0=\Ker\{H^{2p}_{S_d^p}(F^n_d,\Q) \rightarrow H^{2p}_{prim}(F^n_d,\Q)\}$$
	is the kernel of the cycle class map and is generated by linear combinations of the $C_I$ which are null-homologous.
	\label{higherfermatsplitting} 
\end{thm}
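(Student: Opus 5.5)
The plan is to mimic the proof of Theorem \ref{nullhomologoustheorem} for $n>1$, with the averaging operator
$$T=\frac{1}{|G_d^n|}\sum_{g\in G_d^n} g .$$
The group $G_d^n$ acts on $\Sigma^q F_d^n$ through its action on $F_d^n$, with trivial action on the simplicial factors $\Delta^\bullet$. It also preserves the simplicial subscheme $|D|_\bullet$ cut out by the linear subvarieties. Hence $G_d^n$ acts by automorphisms of mixed Hodge structures on every term of the long exact sequence with supports for $\Sigma^q F_d^n$. This makes $T$ an idempotent endomorphism of mixed Hodge structures of each term, and it is compatible with the connecting map $\partial$.

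First I compute $T$ on the weight-lower term. Scholl's identification gives $H^{2p-1}(\Sigma^q F_d^n,\Q)\simeq H^{2p-1-q}(F_d^n,\Q)$, and this isomorphism is $G_d^n$-equivariant because it comes from the projection $\Delta^q\times X\to X$. By the Lefschetz computation and Theorem \ref{shiodatheorem}, the primitive part of $H^{2p-1-q}(F_d^n,\C)$ is either zero or $\bigoplus_{\chi\in A_d^n}V(\chi)$. Every $\chi\in A_d^n$ is non-trivial, so $T$ kills it. This is exactly where passing to primitive cohomology matters. The hyperplane powers span the trivial-character part, which is what the decomposable cycles see, and discarding them is what makes $T$ vanish on $H^{2p-1}_{prim}(\Sigma^q F_d^n,\Q)$.

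Second, on the supported term $H^{2p}_{prim,S_d^p}$ the group $G_d^n$ acts trivially. As observed just before the statement, this holds because for $n>1$ each $Z_I$ and each component of the divisors of the $f_{I_j}$ is an irreducible, $G_d^n$-stable linear subvariety, and the cohomology with supports is spanned by the classes of the components of $|D|_\bullet$. So $T$ is the identity there. Then I copy the earlier argument. For $v_0$ in the kernel term, choose a lift $v$ under $\partial$ and set $s(v_0)=T(v)$. This is well defined because two lifts differ by an element of $H^{2p-1}_{prim}(\Sigma^q F_d^n)$, which $T$ kills. It is a section because $\partial T(v)=T\partial(v)=v_0$. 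It is a morphism of mixed Hodge structures because $T$ is. Its image $\operatorname{Im}(T)$ is therefore a sub-MHS complementary to the weight-lower term, and the sequence splits.

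The main obstacle is justifying that the simplicial mixed-realization sequence of Scholl carries a functorial $G_d^n$-action under which the identification $h^*(\Sigma^qX)\simeq h^{*-q}(X)$ is equivariant, and that the supported cohomology really has trivial action. The faces of $|D|_\bullet$ live on $\Delta^{q-k}\times F_d^n$ and are built from linear subvarieties, so I would first check triviality on each simplicial level using the $n>1$ irreducibility, and then pass to the total complex by a spectral sequence argument. Finally, Proposition \ref{highersplitting} gives the regulator statement.
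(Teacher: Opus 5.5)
This is the paper's route too: its proof only asserts that $T$ works `for exactly the same reasons' as in Theorem \ref{nullhomologoustheorem}. So the issue below is inherited from the paper.

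Your first step is sound. The identification $H^{2p-1}(\Sigma^qF_d^n)\simeq H^{2p-1-q}(F_d^n)$ is equivariant. By Lefschetz and Theorem \ref{shiodatheorem}, the primitive part contains only characters in $A_d^n$, and $T$ kills these.

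The gap is the second step, where you assume $G_d^n$ preserves $|D|_\bullet$ and acts trivially on $H^{2p}_{|D|_\bullet}(\Sigma^qF_d^n,\Q)$. For $q\ge 1$, the level-zero piece of $|D|_\bullet$ is the support of $y$ in $\Delta^q\times F_d^n$. That is the union of the graphs of $(f_{I_1},\dots,f_{I_q})$ over the $Z_I$. These graphs are not linear subvarieties, and the supported cohomology sits inside the span of their classes. An element $g$ sends the graph of $f$ over $Z_I$ to the graph of $f\circ g^{-1}$. Since $\div f$ is $G$-stable, $f\circ g^{-1}=\chi(g)f$ with $\chi(g)\in\mu_d$, typically $\neq 1$. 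In the paper's own example, $f_{bc}=X_b/X_c$ on $C_a$ is rescaled by $\zeta_b/\zeta_c$.

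So $G_d^n$ does not even act on the sequence for $|D|_\bullet$. If you enlarge $|D|$ to its orbit $G_d^n\cdot|D|$, then $G_d^n$ permutes the graph classes rather than fixing them. Then $T$ is not the identity on the supported term, and your $s$ satisfies $\partial s(v_0)=T(v_0)\neq v_0$, so it is not a section. Irreducibility of linear subvarieties for $n>1$ has no bearing on this.

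Separately, even your face-level claim needs $\dim Z_I\ge 2$, not $n>1$. When $\dim Z_I=1$, as for $CH^2(F_d^2,1)$, the divisors of the $f_{I_j}$ may sit on individual points of $0$-dimensional linear subvarieties. $G_d^n$ permutes those points, which is exactly the obstruction the paper notes for curves.

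What averaging actually proves is that the $G_d^n$-invariant part of the sequence splits, since taking invariants is exact over $\Q$. That kills the primitive part of the extension attached to $T[y]$, not to $[y]$. To close the gap you need an extra input relating $y$ to its translates. For instance:
\begin{enumerate}
\item Assume the divisors of the $f_{I_j}$ are $G_d^n$-invariant. Then $g_*y-y$ is built from symbols with a root-of-unity entry. For $q=1$ it is $\sum_I (Z_I,\chi_I(g))$, which is visibly $d$-torsion.
\item Hence $y$ is $G_d^n$-invariant in $CH^p(F_d^n,q)_\Q$.
\item By equivariance of the regulator, $\reg(y)=T_*\reg(y)$.
\item $T_*$ is zero on $\Ext_{MHS}(\Q(-p),H^{2p-q-1}_{prim}(F_d^n,\Q))$, because $T$ kills the primitive part.
\end{enumerate}
This gives the vanishing of the primitive part of $\reg(y)$ directly, without any splitting. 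It also makes explicit the hypothesis on the divisors that the argument really needs.
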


\begin{proof} The proof is identical to Theorem \ref {nullhomologoustheorem} as one can once again consider the operator $T=\sum_{g \in G_d^n} g$ and for exactly the same reasons it produces a splitting. 
	
\end{proof}
We get the following corollary: 
\begin{cor}
	Let $y$ be a motivic cycle in $H^{2p-q}(F_d^n,\Q(p)=CH^p(F_d^n,q)_{\Q}$ of the form 
	$$y=\sum (Z_I,f_{I_i},\dots, f_{I_q})$$
 where $Z_I$ are linear subvarieties and $f_{I_j}$ functions on $Z_I$ with divisorial support on linear subvarieties of $Z_I$. Then $\reg(y)$ is $0$  in the Deligne cohomology group $H^{2p-q}_{\D}(F_d^n,\Q(p))$. 
	\label{higherchowtorsion}
\end{cor}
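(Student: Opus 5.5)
The plan is to deduce the corollary directly from Theorem \ref{higherfermatsplitting} together with Proposition \ref{highersplitting}, in the same way Corollary \ref{nullhomologouscorollary} follows from Theorem \ref{nullhomologoustheorem} and Proposition \ref{splitting}.

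\textbf{Step 1: locate the class of $y$.} Let $|D|\subset \Delta^q F_d^n$ be the support of $y$. It is the union of the graphs of the tuples $(f_{I_1},\dots,f_{I_q})$ over the linear subvarieties $Z_I$. Let $|D|_\bullet\subset\Sigma^qF_d^n$ be the induced simplicial subscheme. Its faces are cut out by the vanishing of the $f_{I_j}$, so they lie over the linear subvarieties $Z_{I\cup\{j\}}$. The cycle $y$ determines a line $\Q(-p)\cdot[y]\subset H^{2p}_{|D|_\bullet}(\Sigma^qF_d^n,\Q)$ that maps to $0$ in $H^{2p}(\Sigma^q F_d^n,\Q)\simeq H^{2p-q}(F_d^n,\Q)$. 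Pulling the localization sequence back along this line gives the extension $E_y$, whose class is $\reg(y)\in\Ext_{MHS}(\Q(-p),H^{2p-q-1}(F_d^n,\Q))=H^{2p-q}_{\D}(F_d^n,\Q(p))$.

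\textbf{Step 2: apply the splitting.} Because every component of $|D|_\bullet$ is built from linear subvarieties and graphs of functions with divisors supported on linear subvarieties, $G_d^n$ preserves each component. Since $n>1$, the group therefore acts trivially on $[y]$, as remarked before Theorem \ref{higherfermatsplitting}. Consider the averaging operator $T=\frac{1}{|G_d^n|}\sum_{g\in G_d^n} g$ acting on $H^{2p-1}(\Sigma^qF_d^n-|D|_\bullet,\Q)$. Choose any lift $v$ of $[y]$. Then $T(v)$ is again a lift, since $T$ commutes with the boundary map and fixes $[y]$. Moreover $T(v)$ is independent of the choice, because $T$ kills the primitive part of $H^{2p-q-1}(F_d^n)$: by Theorem \ref{shiodatheorem}, only the characters in $A_d^n$ occur there, and these are nontrivial. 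Since $T$ is a morphism of mixed Hodge structures, the image $\Q\cdot T(v)$ is a sub-Hodge structure mapping isomorphically to $\Q(-p)$. This is exactly the splitting supplied by Theorem \ref{higherfermatsplitting}, so Proposition \ref{highersplitting} gives $[E_y]=0$ rationally, that is, $\reg(y)=0$ in $H^{2p-q}_{\D}(F_d^n,\Q(p))$.

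\textbf{Step 3, the main obstacle: the non-primitive part.} Theorem \ref{higherfermatsplitting} only splits the primitive sequence. When $2p-q-1$ is even, $H^{2p-q-1}(F_d^n)$ also contains the $G_d^n$-invariant class $h^{p-(q+1)/2}$ of a power of the hyperplane class, and $T$ does not kill it. I would handle this by decomposing $\Ext_{MHS}(\Q(-p),H^{2p-q-1})$ into its primitive part and the part $\Ext_{MHS}(\Q(-p),\Q(-(2p-q-1)/2))$. The second part is a Deligne cohomology group of a point, $\C/\Q(q')$-type, and it is exactly where decomposable cycles land. The statement implicitly discounts decomposables by working with primitive cohomology, so I would either interpret the conclusion as concerning the primitive component of $\reg(y)$, or show that the $h$-component is a sum of $\log|c|$-type contributions from the leading coefficients of the $f_{I_j}$. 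By Rohrlich-type torsion these contributions vanish rationally once the functions are normalized by constants, using decomposables to absorb the constants. The most careful point is checking that $T$ genuinely commutes with the simplicial localization sequence of Scholl's construction. That holds because $G_d^n$ acts on $\Sigma^qF_d^n$ compatibly with all face maps, acting trivially on the simplex factors.
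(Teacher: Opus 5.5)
Your route is the paper's: average over $G_d^n$, then invoke Proposition~\ref{highersplitting}. But the step it rests on fails, both in your proposal and in the paper's own assertion just before Theorem~\ref{higherfermatsplitting}.

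In Step~2 you claim that $G_d^n$ preserves each component of $|D|_\bullet$. These components are the graphs of $(f_{I_1},\dots,f_{I_q})$ over $Z_I$. An element $g\in G_d^n$ fixes $Z_I$ and, when $\dim Z_I\ge 2$, the divisors of the $f_{I_j}$. It does not fix the functions themselves: $f\circ g^{-1}=\chi(g)f$ with $\chi(g)\in\mu_d$, and in general $\chi(g)\neq 1$. For example, for $g=(\zeta_i)$ the function $X_b/X_c$ on $C_a$ becomes $(\zeta_c/\zeta_b)X_b/X_c$. So $g$ moves the graph. Consequently $G_d^n$ acts neither on $H^{2p}_{|D|_\bullet}(\Sigma^qF_d^n,\Q)$ nor on $H^{2p-1}(\Sigma^qF_d^n-|D|_\bullet,\Q)$, and $T(v)$ is not defined. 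Enlarging the support to the $G_d^n$-orbit does not directly help, since $T$ then sends $[y]$ to the average of the distinct classes $[gy]$.

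Worse, if some $Z_I$ is a curve, as in $CH^2(F_d^2,1)$ (which $n>1$ permits), its linear subvarieties are single Fermat points permuted by $G_d^n$. Then even $\div(f_{I_j})$ need not be stable, and no invariance argument applies. So $n>1$ does not give what you use.

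When the divisors are stable, a correct argument runs as follows:
\begin{itemize}
\item The difference $g_*y-y$ expands multilinearly into symbols, each having a $d$-th root of unity as an entry, so it is $d$-torsion.
\item By functoriality, $\reg(y)$ is therefore $G_d^n$-invariant, so it lies in $\Ext(\Q(-p),H^{2p-q-1}(F_d^n,\Q)^{G_d^n})$.
\item By Theorem~\ref{shiodatheorem}, this group involves no primitive classes.
\end{itemize}
Equivalently, replace each $f_{I_j}$ by the invariant function $f_{I_j}^d$, which multiplies $y$ by $d^q$; then the averaging argument goes through as you intended.

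Step~3 cannot be completed, because the literal statement is false on the non-primitive part. The cycle $(C_a,2)\in CH^2(F_d^n,1)$ satisfies the hypotheses, since its divisor is empty. Its regulator is the class of $\log 2$ in the summand $\Ext^1_{MHS}(\Q(-2),\Q(-1))\cong\C/\Q(1)$ of $H^3_{\D}(F_d^n,\Q(2))$ coming from $\Q h\subset H^2(F_d^n,\Q)$. This class is nonzero. Adding $(C_a,2)$ to any admissible $y$ shifts the $h$-component of $\reg(y)$.

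Your second option normalizes constants ``using decomposables''. That replaces $y$ by a different cycle, so it only gives vanishing modulo decomposables. What is provable is your first option: the vanishing of the primitive component of $\reg(y)$. That is also all that Theorem~\ref{higherfermatsplitting}, which is stated for primitive cohomology, actually delivers.
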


\begin{rem} The Bloch-Beilinson conjectures on injectivity of the regulator map  imply that these cycles are torsion in the Chow group itself. 
	\end{rem}
	
	\begin{rem} The argument does not work when $n=1$, that is Fermat curves, as in that case the divisor of a function is supported on points which is not necessarily fixed by the action of $G_d^1$ since the $0$-dimensional linear subvarieties are not irreducible. 
		 In fact there are examples of non-trivial cycles. In the case of  $CH^2(F_d^n,2)$ - Ross \cite{ross}  constructed cycles in the group $CH^2(F_d^n,2)$ which are non-trivial using functions with divisors on the Fermat points. 
		\end{rem}

\begin{rem} Again, like in the case of null-homologous cycles, the only really interesting higher Chow groups are $CH^p(F_d^{2p-q-1},q)$,  for example $CH^2(F_d^2,1),\, CH^2(F_d^1,2),\, CH^3(F_d^4,1), \dots$. These are the cases where the Deligne cohomology is determined by the middle cohomology group. 
	
Once again is Section \ref{nontorsion} we will see that there are possibly non-torsion cycles in these groups coming from the product structure. 
	\end{rem}

\section{Non-torsion cycles on  Fermat varieties.}
\label{nontorsion}

The above theorems show that certain `natural' constructions do not produce interesting cycles on Fermat varieties. This does not mean that the Chow groups of Fermat varieties are trivial. 

There is an inductive structure on Fermat Varieties which has interesting implications for  cycles on Fermat varieties. There is a rational map 
$$F_d^r \times F_d^s \dashrightarrow F^{r+s}_d$$
defined by 
$$[x_0:\dots:x_{r+1}],[y_0:\dots:y_{s+1}] \dashrightarrow [x_0 y_{s+1}:\dots:x_r y_{s+1}:\epsilon x_{r+1}y_0:\dots:\epsilon x_{r+1}y_s]$$ 
where $\epsilon^{d}=-1$. As a result we have a map 
$$F_d^1 \times F_d^1 \times F_d^1 \dashrightarrow F_d^3$$

In the group $CH^2_{hom}((F_d^1)^3)_{\Q}$ there is a well known cycle called the modified diagonal cycle of Gross and Schoen \cite{grsc}. If $e$ is a point on $F_d^1$, 
$$\Delta_e=\Delta-\Delta_{12}-\Delta_{13}-\Delta_{23}+\Delta_1+\Delta_2+\Delta_3$$
where $\Delta_{ij}$ and $\Delta_i$ is the diagonal with $e$ in the complementary positions.
This is called the Gross-Schoen modified diagonal cycles. It is well known that this cycle is in general non-trivial in the Chow group of null-homologous cycles on $(F_d^1)^3$ \cite{otsu}. 

We can consider the image of this cycle under the map induced by the inductive structure to get a null-homologous cycle in $CH^2_{hom}(F_d^3)$ which is likely to be non-torsion. More generally there are modified diagonal cycles in $CH^p_{hom}((F_d^1)^{2p-1})$ studied by Gross-Schoen \cite{grsc} and more recently by Eskandari \cite{eska}. It is likely that these cycles will also map to non-torsion null homologous cycles on $F_d^{2p-1}$ 

Similarly, in the group $CH^2(F^1_d \times F^1_d,1)$, Sarkar \cite{sark} constructed higher Chow cycles using functions on the Fermat curve supported on the cusps and a construction of Bloch's \cite{blocirvine}.

As we mentioned earlier, Ross \cite{ross} constructed non-torsion cycles in the groups $CH^2(F_d^1,2)$ using functions supported on the cusps.

\bibliographystyle{alpha}
\bibliography{Fermatvarieties.bib}

\end{document}